\documentclass[11pt,a4paper]{article}

\usepackage{doi}

\usepackage{graphicx,amssymb,amsmath,amsthm}
\usepackage{xspace}
\usepackage{hyperref}
\usepackage{xcolor}

\usepackage{fullpage}

\newtheorem{theorem}{Theorem}
\newtheorem{proposition}[theorem]{Proposition}
\newtheorem{lemma}[theorem]{Lemma}

\DeclareMathOperator{\diam}{\mathsf{diam}}
\DeclareMathOperator{\radius}{\mathsf{radius}}
\DeclareMathOperator{\len}{\mathsf{len}}
\DeclareMathOperator{\cyc}{\mathsf{cyc}}
\DeclareMathOperator{\leaves}{\ell}

\newcommand{\eps}{\varepsilon}

\let\leq\leqslant
\let\geq\geqslant
\let\le\leqslant
\let\ge\geqslant

\makeatletter
\partopsep\z@ \textfloatsep 10pt plus 1pt minus 4pt
\def\section{\@startsection {section}{1}{\z@}%
  {-3.5ex plus -1ex
    minus -.2ex}{2.3ex plus .2ex}{\large\bf}}
\def\subsection{\@startsection{subsection}{2}%
  {\z@}{-3.25ex plus
    -1ex minus -.2ex}{1.5ex plus .2ex}{\normalsize\bf}}
\def\@fnsymbol#1{\ensuremath{\ifcase#1\or 1\or 2\or 3\or 4\or
    5\or 6\or 7 \or 8\or 9 \or 10\or 11 \else\@ctrerr\fi}}
\makeatother

\title{Diameter and Length of Metric Graphs}

\author{Hee-Kap Ahn%
  \thanks{Graduate School of Artificial Intelligence, Department of
    Computer Science and Engineering, Pohang University of Science and
    Technology. heekap@postech.ac.kr} 
  \and
  Sergio Cabello%
  \thanks{Faculty of Mathematics and Physics, University of Ljubljana, Slovenia, 
    and Institute of Mathematics, Physics and Mechanics, Slovenia. 
    sergio.cabello@fmf.uni-lj.si. 
    Funded in part by the Slovenian Research and Innovation Agency
    (P1-0297, N1-0218, N1-0285, J1-70045) and in part by the European
    Union (ERC, KARST, project number 101071836). Views and opinions
    expressed are however those of the authors only and do not
    necessarily reflect those of the European Union or the European
    Research Council. Neither the European Union nor the granting
    authority can be held responsible for them.} 
  \and
  Otfried Cheong%
  \thanks{SCALGO, Aarhus, Denmark. otfried@scalgo.com}
  \and
  Delia Garijo%
  \thanks{Department of Applied Mathematics I and Institute of
    Mathematics, University of Seville,
    Spain. dgarijo@us.es. Partially supported by grants
    PID2023-150725NB-I00, RED2024-153572-T and SOL2024-31596.} 
  \and
  Jeongwon Moon%
  \thanks{Department of Computer Science and Engineering, Pohang
    University of Science and Technology. jwmoon@postech.ac.kr} 
}

\date{}

\begin{document}

\maketitle

\begin{abstract}
  A metric graph is a metric space obtained from a finite collection
  of intervals whose endpoints are identified in groups.  It can also
  be seen as a finite, edge-weighted graph where the continuum of
  points along the interior of each edge is taken into consideration,
  and each edge is locally isometric to an interval whose length is
  the edge-weight. The diameter of a metric graph $G$ is the maximum
  distance between all pairs of points of $G$.

  We show that the total length of a metric graph $G$ with
  $\leaves(G)$ leaves, cyclomatic number $\cyc(G)$, and diameter
  $\diam(G)$ is at most $\big(\cyc(G) + \max\big\{1, \leaves(G)/2
  \big\}\big) \cdot \diam(G)$. Furthermore, we show that this bound is
  tight, and we characterize the metric graphs where equality holds.
  As an application, we provide tight bounds in certain cases for the
  diameter of metric graphs obtained from a cycle or a star by the
  identification of a fixed number of points (pairwise or in groups).
\end{abstract}


\section{Introduction}
\label{sec:intro}

A {\it metric graph} is a metric space obtained from a finite
collection of intervals whose endpoints are identified in groups.  A
metric graph can also be seen as a finite, edge-weighted graph
$(V,E,w\colon E\rightarrow \mathbb{R})$ where the continuum of points
along the interior of each edge is taken into consideration, and each
edge $e\in E$ is locally isometric to an interval of
length~$w(e)$. With our definition,\footnote[6]{The theory of metric
  graphs naturally carries over to infinite collections of intervals
  or, alternatively, infinite graphs.  In the infinite case the space
  is not compact. We will restrict our attention to finite collections
  of edges.} a metric graph is a compact space. We will restrict our
attention to connected metric graphs.

Several concepts from classical graph theory naturally apply to metric
graphs, and we use them when no confusion can arise. For example, we
may say that a metric graph is a {\it path}, a {\it tree} or a {\it
  cycle}. Similarly, a point $x$ in a metric graph $G$ is a {\it leaf}
of $G$ if it is a leaf vertex in the combinatorial graph defining~$G$.
Alternatively, we could say that $x\in G$ is a leaf if there exists an
$\eps$-neighborhood of~$x$ in $G$ that is homeomorphic to an interval
with endpoint~$x$.

For any two points~$x,y$ in a metric graph~$G$, their {\it distance},
denoted by $d_G(x,y)$, is the minimum length of all paths in~$G$ that
connect~$x$ to~$y$.  A {\it shortest path from~$x$ to~$y$} is a path
of length $d_G(x,y)$ connecting $x$~to~$y$.  Note that the shortest
path between two points on the same edge is not necessarily contained
in the edge.

To continue the discussion, we introduce the following key parameters
associated with a metric graph $G$:
\begin{itemize}
\item The {\it length}~$\len(G)$ of~$G$ is the sum of the lengths of its edges. More  
  generally, the length of a (metric) subgraph is the sum of the lengths of 
  its edges.
\item The {\it diameter}~$\diam(G)$ of~$G$ is the maximum distance between any two points of $G$.
  That is, $\diam(G)=\max_{x\in G}\max_{y\in G} d_G(x,y)$.
\item The {\it radius}~$\radius(G)$ of~$G$ is $\min_{x\in G}\max_{y\in G} d_G(x,y)$.
\item The number~$\leaves(G)$ of leaves of~$G$.
\item The {\it cyclomatic number} $\cyc(G)$ of $G$, assuming that $G$ is connected, 
  is $|E(\mathcal{G})|-|V(\mathcal{G})|+1$, where $\mathcal{G}$ is a combinatorial graph 
  describing $G$. This is also the first Betti number of $G$ as a topological space, 
  and thus it is a topological invariant: different combinatorial graphs  
  describing the ``same'' metric graph give the same cyclomatic number. 
  It is also equivalent to the dimension of the cycle space.
\end{itemize}

\paragraph{Our main results.}

Our goal in this work is to understand the relationship between radius
or diameter on one side and the length of the metric graph on the
other side, taking into account~$\cyc(G)$ and~$\leaves(G)$. More
precisely, we show that for any metric graph $G$
\begin{align*}
  \text{Theorem~\ref{thm:len-radius}:}~~~ &\len(G) ~\leq~
  \big(2\cyc(G) + \leaves(G)\big) \cdot \radius(G),\\ 
  \text{Theorem~\ref{thm:len-diam}:}~~~ &\len(G) ~\leq~ \Big(\cyc(G) +
  \max\big\{1, \frac{\leaves(G)}{2}\big\}\Big) \cdot \diam(G). 
\end{align*}
Moreover, in both cases we characterize the metric graphs where
equality holds in these bounds.

Both bounds look very natural and we were surprised that we could not
find them in the literature. Technically, the proof of the first bound
is not difficult, but it does provide context for the proof of the
second bound, especially to explain why the simpler technique does not
work. To prove the second bound, we use induction on the cyclomatic
number and use a new operation, which we call {\it isometric path
  identification}. At a high level, for the inductive step, we find
and isometrically identify two paths of the same length to reduce the
cyclomatic number by one and to reduce the length of the metric graph
by at least $\diam(G)$.

It is important to note that the results hold for metric graphs,
but they do not hold for classical, combinatorial graphs,
that is, if the radius or the diameter are defined by
considering only the distances between vertices.
See Figure~\ref{fig:discrete} for examples.

\begin{figure}[htb]
  \centerline{\includegraphics[scale=1.1]{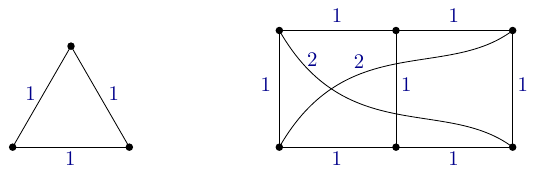}}
  \caption{Left: a graph with discrete radius $1$
    (considering only the distance between vertices),
    length $3$ and cyclomatic number $1$. As a metric graph,
    the radius is $3/2$.
    Right: A graph with discrete diameter $2$ (again considering
    only the distance between vertices), length $11$ 
    and cyclomatic number $4$.
    As a metric graph, the diameter is $5/2$.}
  \label{fig:discrete}
\end{figure}

\paragraph{Wormholes and fusion.}

Wormholes and fusion are two operations on metric graphs where some
points are identified. In the case of \emph{wormholes}, we identify
pairs of points, whereas in the case of \emph{fusion}, we identify a
set of points to a single point. These operations can also be
interpreted as adding zero-length edges to the metric graph.  Note
that, for points $x_1,\dots,x_k$ in a metric graph~$G$, the addition
of the wormholes $\{x_1,x_2\},\dots ,\{x_{k-1},x_{k}\}$ is the same as
the fusion of the set $\{x_1,\dots, x_k\}$. Thus, in general, adding
$k$ wormholes is at least as powerful as the fusion of $k+1$~points.

The following two questions naturally arise: 
\begin{itemize} 
\item For a metric graph $G$ and a budget $k$, can we reduce its
  diameter by adding $k$ wormholes, and how much can we reduce it?
\item For a metric graph $G$ and a budget $k$, can we reduce its
  diameter through a fusion of $k+1$ vertices, and how much can we
  reduce it?
\end{itemize}

We consider these problems in Section~\ref{sec:applications}.  Our
bounds on the total length of a metric graph are used to show some
optimal bounds for paths and stars.  The connection is that each time
we add a wormhole, the cyclomatic number increases by one, but the
total length remains constant.  We also provide some additional
results for stars using different techniques.

\paragraph{Context and related work.}

Metric graphs have been extensively considered in analysis; see for
example~\cite{zbMATH05317314,friedlander2005genericity,BFb0086338,KostenkoN2023}.
Much focus has been put on metric graphs equipped with differential
operators, which are usually called quantum graphs; see for example
the books~\cite{berkolaikoK2013,Kurasov2023} for a comprehensive
treatment.  They have also been studied for several optimization
problems, often under the name of ``continuous graphs'' or some
variant thereof. See for
example~\cite{CabelloR10,Dearing1974,GrigorievHLW21,hakimi1964optimum,HartmannLW22,Shier1977,Tamir1991}
for facility-location and packing problems, or~\cite{FGHHM24} for the
problem of finding a shortest tour that visits the
$\delta$-neighborhood of each point in a metric graph.

The diameter of metric graphs was studied by Chen and
Garfinkel~\cite{ChenGar82}---they developed a quadratic-time algorithm
(in the number of edges of the graph) to compute this parameter. In
this direction, in~\cite{CabelloGKKPS25}, the authors present
subquadratic-time algorithms to compute the diameter of some classes
of sparse metric graphs.

The mean distance of a metric graph $G$, defined as
$\frac{1}{\len(G)^2}\iint_{x,y\in G} d_G(x,y) \,dx \,dy$, has been
studied recently in~\cite{BKM24,GarijoMS2023}.  Particularly relevant
is the work of Baptista, Kennedy, and Mugnolo~\cite{BKM24} since they
characterize metric graphs with the \emph{minimum mean distance}.  In
contrast, we characterize the graphs with minimum diameter, that is,
minimum maximum distance.

We are not aware of any work considering wormholes in
\emph{combinatorial} or \emph{classical} graphs.  However, wormholes
are closely related to the addition of shortcuts to decrease the
diameter.  The use of wormholes is a neat abstraction in settings
where the cost of using new connections is negligible compared to the
cost of using the existing network.  The problem of adding wormholes
is completely intrinsic to the metric graph, while adding shortcuts,
that is, connections of some non-zero length between points of the
metric graph, requires ``ambient'' knowledge to determine the length
of such a connection.

Optimal placement of shortcuts to decrease the diameter has been
studied when the input objects lie in a Euclidean
space~\cite{BAE201937,CaceresGHMPR18,CGSS-17,CarufelMS16,EomASA26,GarijoMRS19,GrosseKSGS19}.
In this setting, the length of the shortcut is given by the Euclidean
length of the new chord, a property extrinsic to the input
object. When considering the diameter after the augmentation, one has
to take into account the new points lying on the shortcut, as they may
define the new diameter.

Shortcuts between vertices in classical graphs to minimize the
diameter have been considered
extensively~\cite{AGR2000,BiloGP12,DemaineZ10,DodisK99,FratiGGM15,GaoHN13,LiMS92,SchooneBL87}.
In some cases, all edges have unit weights, while in other cases
different weights have been considered.  In this setting, only
shortcuts connecting vertices are allowed and the diameter is
restricted to the vertices of the graph.  In our setting, we consider
connecting (with cost zero) arbitrary points and consider the
continuous version of the diameter (in the metric graph).  Both
settings present their own challenges and results from one do not seem
to easily carry to the other.

Computational aspects of decreasing the diameter of a combinatorial
graph using the fusion of vertices is introduced and studied
in~\cite{COMAS20091612}. They show that the problem is computationally
hard, but it can be solved in polynomial time for trees.  Compared to
this work, our fusion in metric graphs allows to merge arbitrary
points on the metric graph, instead of just vertices, and we consider
the continuous diameter as the measure to minimize. We can think of
our fusion as the limit process of the discrete setting, if we
subdivide each edge multiple times.

A natural attempt to show an upper bound on the length of a metric
graph $G$ is to argue that it can be covered by $k(G)$ shortest paths,
for some function $k(G)$. From this, we obtain that $\len(G)\le
k(G)\cdot \diam(G)$.  This line of thought is closely related to the
concept of \emph{geodesic cover} (also known as isometric path cover),
which has been studied for combinatorial
graphs~\cite{Fitzpatrick99,ManuelKPA25,PanC06}. Particularly relevant
is the work by Chakraborty, Foucaud, and
Hakanen~\cite{ChakrabortyFH25} as they also use the cyclomatic number
as a main parameter.  To prove Theorem~\ref{thm:len-diam} using this
approach, we would need to argue that a metric graph can be covered
with $\cyc(G)+\max\{1,\leaves(G)/2\}$ shortest paths.  However, as the
example in Figure~\ref{fig:no-cover} shows, this is not true.  We will
encounter this example as a special case of the so-called \emph{Medusa
graph} in Section~\ref{sec:diameter}.
\begin{figure}[htb]
  \centerline{\includegraphics[width=.4\textwidth,page=2]{lambda-medusa}}
  \caption{A metric graph~$G$ with total length $12$ and diameter $6$,
    attained by the distance between a leaf and a point on the upper edge.
    It has cyclomatic number $1$ and two leaves, but cannot 
    be covered with $\cyc(G)+\max\{1,\leaves(G)/2\}=2$ shortest paths.}
  \label{fig:no-cover}
\end{figure}

\paragraph{Assumptions.}

As mentioned above, we will assume that the metric graphs are
connected, since otherwise our bounds on their total length are
trivial.  We will often assume that they have no vertices of degree
two; such vertices are not needed in metric graphs because we can
insert them by splitting edges and remove them by merging edges
without ``changing'' the metric graph.  This is particularly useful
when talking about the neighbor vertex of a leaf.


\section{Radius and length of metric graphs}
\label{sec:radius}

As a warm-up exercise, we prove the following theorem that relates the
radius and the length of a metric graph.  The proof will immediately
allow us to characterize the metric graphs for which the bound holds
with equality.
\begin{theorem}
  \label{thm:len-radius}
  For a metric graph~$G$ we have
  \[
  \len(G) \leq \big(2\cyc(G) + \leaves(G)\big) \cdot \radius(G).
  \]
\end{theorem}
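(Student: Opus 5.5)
Let $c$ be a center of $G$, so every point of $G$ is within distance $r:=\radius(G)$ of $c$, and consider a shortest-path tree from $c$. We may first subdivide so that $c$ is a vertex; this changes neither $\len(G)$, $\cyc(G)$ nor $\leaves(G)$, except in the degenerate case where $G$ is a single point or $c$ is itself a leaf, which we handle separately.

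The plan is to cut $G$ into a tree $T$ rooted at $c$ whose distances from $c$ agree with $d_G(c,\cdot)$, and then to bound $\len(T)$ by $r$ times its number of leaves.

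\textbf{Step 1: cutting the cycles.} For each edge $e=uv$ of $G$, the function $d_G(c,\cdot)$ restricted to $e$ is piecewise linear with slope $\pm1$. Whenever $e$ is not traversed fully by a shortest path from $c$, this function attains an interior maximum at a point $p_e$ with $d_G(c,p_e)\le r$. We cut $G$ at each such point $p_e$; each cut creates two new leaves. This yields a shortest-path tree $T$ in which every point $x$ satisfies $d_T(c,x)=d_G(c,x)$. A minor tie-breaking is needed so that $T$ is a tree: choose one parent per vertex and cut the non-tree edges at their maximizing point. The number of non-tree edges is exactly $\cyc(G)$, since the tree edges together with $c$ span $G$. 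Hence
\[
\len(T)=\len(G),\qquad \leaves(T)\le \leaves(G)+2\cyc(G),
\]
where the leaves of $T$ are counted excluding $c$. We may assume $c$ is not a leaf: if it were, moving $c$ inward would not increase the eccentricity unless $G$ is a path, and that case is trivial.

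\textbf{Step 2: covering the tree.} $T$ is the union of the root-to-leaf paths from $c$ to its leaves. Each such path is a shortest path in $T$, and its length equals the $G$-distance from $c$ to that leaf, which is at most $r$. Therefore
\[
\len(G)=\len(T)\le \leaves(T)\cdot r\le \big(2\cyc(G)+\leaves(G)\big)\,r.
\]

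\textbf{Main obstacle.} The delicate point is Step 1: ensuring that each non-tree edge is cut exactly once, at a point whose distance from $c$ is at most $r$, so that it contributes exactly two leaves of depth at most $r$. We also need to ensure that degree-two vertices and original leaves are counted correctly. The concern is that a leaf of $G$ might be a non-leaf in $T$, or that the root $c$ might be a leaf. If $c$ is a leaf of $G$, its count of one still suffices, since the root then lies on one path and is not a leaf of $T$ beyond those already counted.
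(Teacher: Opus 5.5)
Your proof is correct and takes essentially the same route as the paper: a shortest-path tree from a center, at most $2\cyc(G)$ leaves beyond those of $G$, and a cover by root-to-leaf paths of length at most $\radius(G)$. You phrase it as cutting the $\cyc(G)$ non-tree edges where the paper undoes identifications, and the one wording fix is that a non-tree edge lying on a shortest path has its maximum at an endpoint, so that cut creates at most (not exactly) two leaves, which is all the bound needs.
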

\begin{proof}
  Let~$R := \radius(G)$ and let~$u$ be a center of~$G$, that is, a
  point such that~$d_G(u, x) \leq R$ for all~$x \in G$.  Let~$T$ be a
  shortest-path tree in~$G$ with origin~$u$. We observe that~$\len(T) =
  \len(G)$. 
  Every leaf of~$T$ is either a leaf of~$G$ or a point~$x$
  that has more than one shortest path from~$u$.  We can go back
  from~$T$ to~$G$ by identifying these points, each such
  identification increases the cyclomatic number by one, and removes
  one or two leaves.  It follows that~$\leaves(T) \le \leaves(G) +
  2\cyc(G)$.  Since~$T$ is entirely covered by the paths from~$u$ to
  all leaves, and each such path has length at most $R$,
  we have $\len(G) = \len(T)\le \leaves(T)\cdot R
  \le (\leaves(G) + 2\cyc(G))\cdot R$. 
\end{proof}

For which graphs~$G$ does the bound in Theorem~\ref{thm:len-radius}
hold with equality? From the proof it follows that this is the case
exactly when the shortest-path tree~$T$ from~$u$ has~$2\cyc(G) +
\leaves(G)$ leaves, the paths from~$u$ to each leaf are pairwise
disjoint and each of them has length $R$.  This is the case exactly
if~$T$ is a star with $2\cyc(G) + \leaves(G)$ edges of length~$R$.  To
reconstruct~$G$ from~$T$, we must identify $\cyc(G)$ pairs of points
to obtain a graph with cyclomatic number $\cyc(G)$. Moreover, we have
to identify disjoint pairs of leaves of $T$. Indeed, any other
$\cyc(G)$ identifications using some point that is not a leaf of $T$
or identifying three or more leaves together gives a graph that has at
least $\leaves(G)+1$ leaves.  See Figure~\ref{fig:radius-equality} for
an example.  Thus, the graph~$G$ consists of a single non-leaf
vertex~$u$, $\leaves(G)$ edges of length~$R$ incident to~$u$, and
$\cyc(G)$ loops of length~$2R$ also incident to~$u$.  See the
rightmost graph in Figure~\ref{fig:radius-equality}.  In the next
section we will meet this graph again as an $R$-Medusa graph.

\begin{figure}[tb]
  \centerline{\includegraphics[]{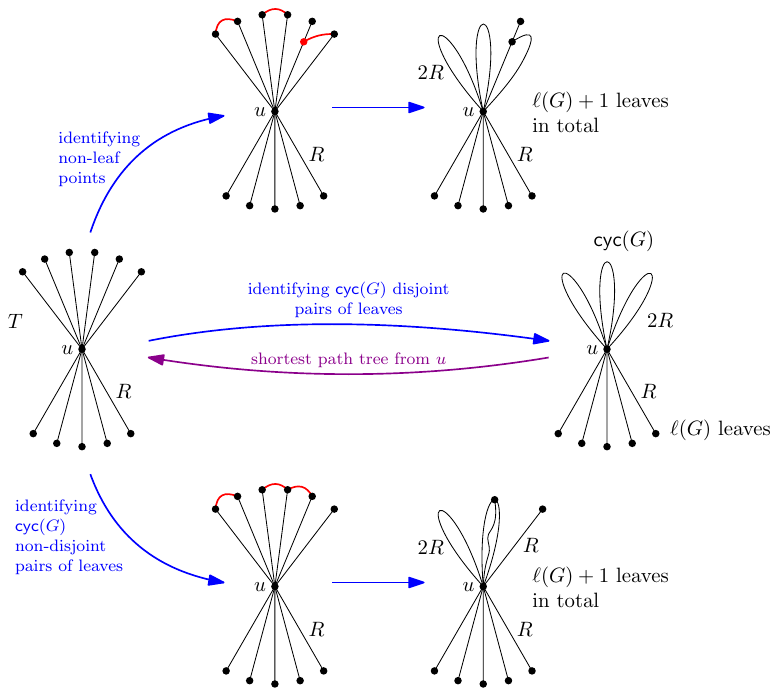}}
  \caption{Characterizing the longest metric graphs with fixed radius
    and number of leaves. In this example $\cyc(G)=3$ and
    $\leaves(G)=5$, and thus tree $T$ (on the left side) has $2\cdot
    3+5=11$ leaves.  Only the identification in the middle path gives
    a graph with cyclomatic number $3$, $5$ leaves and whose
    shortest-path tree gives $T$. The identifications in the upper or
    lower path give too many leaves.}
  \label{fig:radius-equality}
\end{figure}


\section{Diameter and length of metric graphs}
\label{sec:diameter}

In this section we turn to the diameter of the graph. Our main theorem relates the diameter
and the total length of a metric graph~$G$.  It is similar to Theorem~\ref{thm:len-radius}, 
but proving it requires considerably more effort.
\begin{theorem}
  \label{thm:len-diam}
  For a metric graph~$G$ we have
  \[
  \len(G) \leq \Big(\cyc(G) + \max\big\{1, \frac{\leaves(G)}{2}\big\}\Big) \cdot \diam(G).
  \]
\end{theorem}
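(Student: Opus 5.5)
We argue by induction on $\cyc(G)$, following the outline from the introduction: find two isometric paths, identify them, and lose exactly one cycle and at least $\diam(G)$ of length, without increasing the diameter or the number of leaves. Throughout, $D:=\diam(G)$, and we assume $G$ has no vertices of degree two.

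\textbf{Base case $\cyc(G)=0$.} Here $G$ is a tree and the claim is $\len(G)\le \max\{1,\leaves(G)/2\}\cdot D$.
\begin{itemize}
\item If $\leaves(G)\le 2$, then $G$ is a path (or a point), and $\len(G)=D$.
\item Otherwise, pair the leaves $x_1,\dots,x_{\ell}$ cyclically in the order they are met by a DFS traversal (planar embedding) of the tree. The closed walk through the leaves in this order traverses every edge exactly twice. Hence
\[
2\len(G)=\sum_{i} d_G(x_i,x_{i+1})\le \ell D,
\]
which is the claimed bound.
\end{itemize}

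\textbf{Inductive step, $\cyc(G)\ge 1$.} We want a metric graph $G'$ with
\[
\cyc(G')=\cyc(G)-1,\qquad \leaves(G')\le\leaves(G),\qquad \diam(G')\le D,\qquad \len(G')\le\len(G)-D.
\]
Induction on $G'$ then finishes the proof. When $\leaves(G)\le 1$, we may also have to check that $\max\{1,\cdot\}$ behaves correctly if $G'$ gains a leaf; for example, the identification may create a leaf, but only while $\leaves(G')\le 2$.

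To build $G'$, pick a cycle $C$ and two points $a,b$ on it with $d_G(a,b)$ as large as possible along suitably chosen geodesics. Take two internally disjoint paths $P_1,P_2$ starting at a common point $p$, each of length $L$ and each of which is a shortest path in $G$. Identify $P_1$ with $P_2$ isometrically, starting from $p$. Identification is a quotient map that does not increase distances, so $\diam(G')\le D$. The length drops by exactly $L$, and the identification merges one cycle, so the cyclomatic number drops by one provided $P_1\cup P_2$ lies on a single cycle and the far endpoints are not identified. Leaves are not created as long as the far endpoints of $P_1$ and $P_2$ are not leaves that get merged into degree-one points.

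\textbf{Main obstacle: ensuring $L\ge D$.} If the shortest cycle $C$ has length at least $2D$, take $p\in C$ and let $P_1,P_2$ be the two arcs of length $D$ leaving $p$ along $C$. These are geodesic arcs because $C$ is a shortest cycle (any arc of a shortest cycle of length at most half the cycle is a shortest path). Identifying them removes $D$ of length and one cycle.

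If every cycle is shorter than $2D$, this fails. Here I would take a shortest cycle $C$ of length $c<2D$ and extend outward: from the point $p\in C$ antipodal to where $C$ attaches to the rest of the graph, fold the cycle (gaining $c/2$) and then continue identifying along two disjoint geodesics leaving $C$, until the total identified length reaches $D$. If the rest of the graph is too small to continue, as in the Medusa-type example of Figure~\ref{fig:no-cover}, then it contains few points at distance $D$, and I would instead bound the length directly, by showing that such a $G$ is a Medusa graph.

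This case analysis is where most of the work lies. It requires two things: choosing $p$ so that the continued paths remain geodesic, and checking that the identification never merges leaves, so that $\leaves(G')\le\leaves(G)$ is preserved.
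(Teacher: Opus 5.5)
Your base case is fine. Pairing the leaves cyclically in DFS order, so that the closed walk covers every edge twice, is a valid alternative to the paper's proofs of Lemma~\ref{lem:len-diam-tree}. The inductive step, however, has its inequality in the wrong direction. Write $\Delta=\len(G)-\len(G')$ for the removed length. The induction hypothesis gives only the upper bound $\len(G')\le(\cyc(G)-1+\max\{1,\leaves(G)/2\})\cdot D$, so you recover the bound for $G$ exactly when $\Delta\le D$. Your target $\len(G')\le\len(G)-D$, i.e.\ $\Delta\ge D$, yields no upper bound on $\len(G)$ at all. So the ``main obstacle'' you identify, pushing the identification beyond $C$ until $L\ge D$, aims at the wrong goal, and carrying it out would only hurt. (The informal sentence in the introduction is misleading here; in the actual proof the removed length is $\len(C)/2$ or $\len(C)/2+\lambda$, always at most $D$.)

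The correct requirement is cheap when there are few leaves. A shortest cycle $C$ is isometric, so its antipodal points are at distance $\len(C)/2$, and hence $\len(C)\le 2D$ always; your case $\len(C)\ge 2D$ is essentially vacuous. Folding the two half-cycles from a vertex of degree at least three to its antipode removes $\len(C)/2\le D$. It lowers $\cyc$ by one precisely because the two paths share \emph{both} endpoints (Lemma~\ref{lem:iso-identi}(v)). Paths with a common start and distinct far endpoints leave $\cyc$ unchanged (Lemma~\ref{lem:iso-identi}(iv)), contrary to the condition you state. At most one leaf is created, so this settles $\leaves(G)\le 1$, where $\max\{1,\leaves(G')/2\}$ stays $1$.

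What is genuinely missing is the case $\leaves(G)\ge 2$. Folding a cycle can turn its antipode into a new leaf; this antipode is a point of degree two, not an existing leaf being merged. A new leaf raises the leaf term of the bound by $D/2$ and breaks the induction. The paper compensates by also deleting a leaf, as follows:
\begin{itemize}
\item After Lemma~\ref{lem:leaf-equal}, all leaves have length $\lambda$ and a common neighbor $x$.
\item Take $z$ in the leafless part $G^\circ$ farthest from $x$, and two distinct shortest $x$--$z$ paths. These exist because $z$ maximizes the distance from $x$.
\item Let $s$ be their last common vertex and $C$ the cycle they form after $s$.
\end{itemize}
Then $d_G(v_1,z)=\lambda+d_G(x,s)+\len(C)/2\le D$. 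So folding $C$ and removing the leaf $v_2$ removes $\len(C)/2+\lambda\le D$, lowers $\cyc$ by one, and keeps $\leaves(G')\le\leaves(G)$.

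Nothing in your plan controls the newly created leaf or ties the removed length to a distance realized in $G$ in this way. Your fallback of ``showing that such a $G$ is a Medusa graph'' is also unsupported: the paper proves the Medusa characterization only after this theorem, and using it.
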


The bound is again tight: we will characterize the graphs that achieve
it below in Theorem~\ref{thm:medusa}.  Before we turn to the general
theorem, let us treat the case~$\cyc(G)=0$, that is, when~$G$ is a
tree:
\begin{lemma}
  \label{lem:len-diam-tree}
  For a metric tree~$T$ we have
  \[
  \len(T) \leq \frac{\leaves(T)}{2} \cdot \diam(T).
  \]
\end{lemma}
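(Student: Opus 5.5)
We argue by pairing the leaves so that the shortest paths between paired leaves cover the whole tree. Each such path has length at most $\diam(T)$, and there are $\leaves(T)/2$ pairs when $\leaves(T)$ is even. Then $\len(T)\le \frac{\leaves(T)}{2}\diam(T)$ follows, just as the radius bound followed from covering $T$ by root-to-leaf paths in the proof of Theorem~\ref{thm:len-radius}.

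We dispose of degenerate cases first. If $T$ is a single point the statement is trivial. Otherwise $\leaves(T)\ge 2$, and we assume, as agreed, that $T$ has no vertices of degree two.

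\textbf{Even number of leaves.} Choose a pairing of the leaves that maximizes the sum of the distances between paired leaves. We claim every edge $e$ of $T$ lies on the path of some pair. Removing an interior point of $e$ splits $T$ into two components $A$ and $B$, each containing at least one leaf. Suppose no pair crosses $e$. Then $A$ and $B$ each contain an even number of leaves, and each contains at least one pair, say $\{a_1,a_2\}$ in $A$ and $\{b_1,b_2\}$ in $B$. Replace these by the pairs $\{a_1,b_1\}$ and $\{a_2,b_2\}$. By the four-point (tree) structure, $d(a_1,b_1)+d(a_2,b_2)\ge d(a_1,a_2)+d(b_1,b_2)+2\len(e)>d(a_1,a_2)+d(b_1,b_2)$. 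Indeed, the path between $a_1$ and $a_2$ and the path between $a_1$ and $b_1$ can be split at the point where they leave $A$'s side, and the new pairs both traverse $e$. This contradicts maximality, so the paired paths cover $T$.

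\textbf{Odd number of leaves.} This is the main obstacle, since a naive argument gives only $\frac{\leaves(T)+1}{2}\diam(T)$. My plan is to use a center-based argument instead. Let $c$ be the midpoint of a diametral path, so every point is within $D/2$ of $c$, where $D=\diam(T)$. Root $T$ at $c$. The union of the root-to-leaf paths covers $T$, and each has length at most $D/2$, which gives $\len(T)\le \leaves(T)\cdot D/2$. This bound works for any number of leaves, so the pairing argument above is not even needed.

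We must justify that every point is within $D/2$ of $c$. Take a diametral path between leaves $p$ and $q$, and any point $x$. The path from $x$ meets the $p$--$q$ path at some point $y$; by symmetry assume $y$ lies on the $c$--$q$ half. Then $d(x,c)=d(x,y)+d(y,c)$. Also $d(x,p)=d(x,y)+d(y,c)+D/2\le D$, so $d(x,c)\le D/2$. This is the standard tree-center fact. With it, the proof reduces to the one-line covering argument, and the only care needed is that $T$ is the union of the paths from $c$ to the leaves, which holds since every point of a tree lies on the path from $c$ to some leaf.
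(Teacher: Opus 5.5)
Your proof is correct and is essentially the paper's first proof: Theorem~\ref{thm:len-radius} with $\cyc(T)=0$ combined with $\radius(T)\le \diam(T)/2$, where you supply the midpoint-of-a-diametral-path argument that the paper only calls easy to see. The even-leaf pairing exchange argument is valid but, as you note yourself, unnecessary; the paper's alternative proof avoids the parity issue differently, by double counting over all $\binom{\ell}{2}$ leaf pairs.
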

\begin{proof}
  For one proof, use Theorem~\ref{thm:len-radius} and observe that in
  metric trees~$T$, $\diam(T) = 2 \radius(T)$. (For general metric spaces
  $\diam(G) \leq 2 \radius(G)$, but it is easy to see that for metric 
  trees the equality holds.)

  Another proof uses double-counting.  First, we sum up the lengths of
  the paths connecting every pair of leaves.  This sum is at most
  ${\ell \choose 2} \diam(T)$, where $\ell = \leaves(T)$. Next,
  consider an edge~$e$.  It appears in this summation at least~$\ell -
  1$ times, since it separates at least one leaf from the remaining
  leaves.  Summing over all edges, we obtain
  \[
  (\ell - 1) \cdot \len(T) \leq {\ell \choose 2} \cdot \diam(T)
  = \frac{\ell(\ell - 1)}{2}\cdot \diam(T),
  \]
  implying the lemma.
\end{proof}

For which metric trees~$T$ does the bound in Lemma~\ref{lem:len-diam-tree}
hold with equality?  Since $\diam(T) = 2 \radius(T)$ for metric trees, 
these are exactly the trees for which Theorem~\ref{thm:len-radius} holds with equality,
that is, the stars with edge length~$\diam(T)/2$.

Our main tool for proving Theorem~\ref{thm:len-diam} in general is
\emph{isometric path identification} in metric graphs.  Let~$G$ be a
metric graph, and let~$\alpha$ and~$\beta$ be two simple paths of the
same length~$\lambda$ in~$G$ that are disjoint, except possibly at
their endpoints.  We can consider~$\alpha$ and~$\beta$ as
parameterized curves $[0, \lambda] \mapsto G$ of unit speed.  We now
define a new metric graph~$G'$ by \emph{identifying} the
points~$\alpha(t)$ and~$\beta(t)$ for all~$0 \leq t \leq \lambda$. We
let~$\pi$ be the natural projection from~$G$ to~$G'$. See
Figure~\ref{fig:path-identification} for an illustration.
\begin{figure}[t]
  \centerline{\includegraphics[width=.9\textwidth]{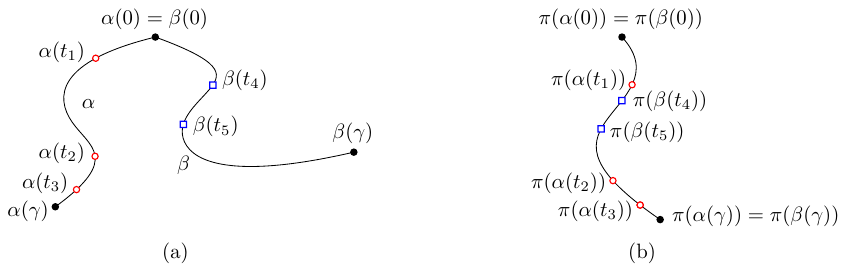}}
  \caption{(a) Simple paths $\alpha$ and $\beta$ of length
    $\lambda$. (b) Identification of $\alpha(t)$ and $\beta(t)$ for
    all $0\le t\le\lambda$.} 
  \label{fig:path-identification}
\end{figure}

\begin{lemma}
  \label{lem:iso-identi}
  Isometric path identification has the following properties:
  \begin{enumerate}
  \item[(i)] for all $x, y \in G$, we have $d_{G'}(\pi(x), \pi(y)) \leq
    d_{G}(x, y)$;
  \item[(ii)] $\diam(G') \leq \diam(G)$;
  \item[(iii)] $\len(G') = \len(G) - \lambda$;
  \item[(iv)] if $\alpha(0) = \beta(0)$ and $\alpha(\lambda) \neq
    \beta(\lambda)$, then~$\cyc(G') = \cyc(G)$;
  \item[(v)] if $\alpha(0) = \beta(0)$ and $\alpha(\lambda) =
    \beta(\lambda)$, then~$\cyc(G') = \cyc(G) - 1$.
  \end{enumerate}
\end{lemma}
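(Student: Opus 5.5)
I will treat $G'$ as the quotient metric space of $G$ under the equivalence relation generated by $\alpha(t)\sim\beta(t)$, with $d_{G'}$ the induced path metric. Equivalently, $G'$ is the metric graph obtained by subdividing so that $\alpha$ and $\beta$ are unions of edges and then merging corresponding edges.

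For (i), I take a shortest path $\gamma$ in $G$ from $x$ to $y$. Its image $\pi\circ\gamma$ is a path in $G'$ from $\pi(x)$ to $\pi(y)$. Since $\pi$ is $1$-Lipschitz on each edge (it maps edges isometrically, or folds them onto merged edges of the same length), $\pi\circ\gamma$ has length at most $\len(\gamma)=d_G(x,y)$. This gives $d_{G'}(\pi(x),\pi(y))\le d_G(x,y)$. Property (ii) follows at once because $\pi$ is surjective: any pair $x',y'\in G'$ has preimages $x,y$, so $d_{G'}(x',y')\le d_G(x,y)\le \diam(G)$.

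For (iii), I would subdivide $G$ so that $\alpha$ and $\beta$ are concatenations of edges $a_1,\dots,a_k$ and $b_1,\dots,b_k$ with $\len(a_i)=\len(b_i)$. This is possible by inserting degree-two vertices at the preimages of the breakpoints of both parametrisations. Then $G'$ is the graph in which each pair $a_i,b_i$ becomes a single edge and all other edges are kept. The hypothesis that $\alpha$ and $\beta$ are simple and disjoint except possibly at endpoints ensures the $2k$ edges are distinct. Hence exactly $\sum_i\len(b_i)=\lambda$ of length is lost, and nothing else changes.

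For (iv) and (v), I count vertices and edges in this combinatorial description. The edge count drops by $k$. Each vertex $\alpha(t_i)=\alpha$-breakpoint is identified with $\beta(t_i)$. Interior breakpoints, $i=1,\dots,k-1$, are distinct vertices pairwise merged, so they lose $k-1$ vertices. At the endpoints, $\alpha(0)=\beta(0)$ is already one vertex and loses nothing. The other end loses one vertex if $\alpha(\lambda)\ne\beta(\lambda)$ and none if they coincide. In case (iv) the vertex count drops by $k$, so $|E|-|V|+1$ is unchanged. In case (v) it drops by $k-1$, so the cyclomatic number drops by one. Connectivity is preserved, since $\pi$ is a continuous surjection, so the formula applies.

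The main subtlety I expect is bookkeeping at the endpoints. I need to confirm that the vertex identifications are exactly those listed: the disjointness hypothesis guarantees no interior point of $\alpha$ equals any point of $\beta$, so no accidental extra merges occur. I also need the formula for $\cyc$ to remain valid for the quotient graph.
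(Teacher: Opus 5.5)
Your proposal is correct and follows the paper's proof essentially step by step. For (i) and (ii) you project a shortest path, whose image in fact has the same length, so your ``at most'' suffices, and you use that $\pi$ is surjective. For (iii)--(v) you subdivide so that $\alpha$ and $\beta$ consist of matching edges, then count that $k$ edges and either $k$ or $k-1$ vertices disappear. Your remarks on the disjointness hypothesis preventing accidental merges and on $G'$ staying connected are details the paper leaves implicit.
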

\begin{proof}
  Consider a shortest path~$\gamma$ connecting~$x$ and~$y$ in~$G$. The
  projection~$\pi(\gamma)$ is a path of equal length (possibly it is
  not a simple path or not a shortest path), implying claim~(i).
  Claim~(ii) then follows immediately, and claim~(iii) is obvious.

  To prove the other two claims, we count edges and vertices of~$G$
  and~$G'$. Before doing so, we introduce new degree-two vertices
  on~$\alpha$ and~$\beta$ at all points that will be projected to a
  vertex of~$G'$ (in other words, at all locations on~$\alpha$ that
  correspond to a vertex on~$\beta$ and vice versa).  After this step,
  $\alpha$~and~$\beta$ consist of the same number~$e$ of
  edges. Let~$n$ and~$m$ be the number of vertices and edges of~$G$,
  such that~$\cyc(G) = m - n + 1$.  During path identification, $e$
  pairs of edges are identified, so~$G'$ has $m - e$ edges.  If
  $\alpha(0) = \beta(0)$ and $\alpha(\lambda) \neq \beta(\lambda)$,
  then $e$ pairs of vertices are identified, so $G'$ has $n - e$
  vertices and therefore~$\cyc(G') = (m - e) - (n - e) + 1 = m - n +
  1$, proving claim~(iv).  If $\alpha(0) = \beta(0)$ and
  $\alpha(\lambda) = \beta(\lambda)$, then only the $e-1$ inner path
  vertices are identified, so~$G'$ has $n - (e - 1)$ vertices, and
  $\cyc(G') = (m - e) - (n - e + 1) + 1 = m - n = \cyc(G) - 1$,
  proving claim~(v).
\end{proof}

We will use isometric path identification in the form described in the
following lemma.

\begin{lemma}
  \label{lem:identi-cycle}
  Let $G$ be a connected metric graph containing a cycle~$C$. If $G
  \neq C$, then there is a metric graph~$G'$ 
  such that~$\diam(G') \leq \diam(G)$,
  $\len(G') = \len(G) - \len(C)/2$, $\cyc(G') = \cyc(G) - 1$, and
  $\leaves(G') \leq \leaves(G) + 1$.
\end{lemma}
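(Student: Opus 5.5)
We will obtain $G'$ from a single isometric path identification, using Lemma~\ref{lem:iso-identi}. The idea is to split the cycle $C$ at a well-chosen point $p$ into two paths of length $\len(C)/2$ and fold them onto each other.

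\textbf{Choice of $p$.} Since $G$ is connected and $G\neq C$, some point $p\in C$ has degree at least three in $G$. Concretely, some edge not in $C$ is incident to a vertex of $C$. Let $L=\len(C)$ and parameterize $C$ by arc length starting at $p$.

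\textbf{The identification.} Let $\alpha$ and $\beta$ be the two arcs of $C$ from $p$ to its antipodal point $q$ (at distance $L/2$ along $C$), each of length $\lambda=L/2$. They are simple and disjoint except at their common endpoints $p$ and $q$. Identifying $\alpha(t)$ with $\beta(t)$ gives $G'$, and Lemma~\ref{lem:iso-identi} then yields:
\begin{itemize}
\item $\diam(G')\le\diam(G)$ by~(ii);
\item $\len(G')=\len(G)-L/2$ by~(iii);
\item $\cyc(G')=\cyc(G)-1$ by~(v), since $\alpha(0)=\beta(0)$ and $\alpha(\lambda)=\beta(\lambda)$.
\end{itemize}

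\textbf{Counting leaves.} Every point of $G$ off $C$ keeps its local structure under $\pi$. A point of $\alpha\cup\beta$ other than $p,q$ is merged with its partner, so its degree in $G'$ equals the sum of the two original degrees minus $2$. Each original degree is at least $2$, so this sum is at least $2$ and no new leaf arises there.

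At $p$ the two cycle directions merge, so $\deg_{G'}(\pi(p))=\deg_G(p)-1$. Because $\deg_G(p)\ge3$, this is at least $2$, and $p$ does not become a leaf. At $q$ we similarly get $\deg_{G'}(\pi(q))=\deg_G(q)-1$. This may equal $1$ when $\deg_G(q)=2$, which creates at most one new leaf. Leaves of $G$ lie off $C$ and remain leaves, so $\leaves(G')\le\leaves(G)+1$.

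\textbf{Main obstacle.} The key step is choosing $p$ so that only one endpoint of the folding can turn into a leaf. If both $p$ and $q$ had degree two, the fold would create two leaves. This is exactly where the hypothesis $G\neq C$ enters, and it is why $p$ must be taken at a branching point of $C$.

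A second point needs care: $\alpha$ and $\beta$ must be disjoint in their interiors. They are, because $C$ is a simple cycle. Any edges of $G$ attached to interior points are simply carried along by the identification, which is precisely what the vertex and edge count in the proof of Lemma~\ref{lem:iso-identi} handles.
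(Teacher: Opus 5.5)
Your proposal is correct and follows the paper's proof: pick a vertex of $C$ of degree at least three, identify the two half-cycles from it to its antipode, read off the diameter, length and cyclomatic number from Lemma~\ref{lem:iso-identi}, and note that only the antipode can become a new leaf. Your explicit degree count (sum of the two degrees minus $2$ at merged interior points, degree minus $1$ at the two endpoints) is a more detailed version of the paper's leaf argument.
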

\begin{proof}
  Let $u$ be a vertex of~$C$ of degree larger than two (it exists
  since $G \neq C$ and $G$ is connected), and let~$v$ be the
  \emph{antipode} of~$u$ on~$C$, that is, the point (not necessarily a
  vertex) on~$C$ such that the two ``half-cycles'' $C[u,v]$ and
  $C[v,u]$ have length~$\len(C)/2$.  We obtain~$G'$ by isometric path
  identification of these two half-cycles of length~$\len(C)/2$
  connecting~$u$ and~$v$.  Leaves of~$G'$ are either leaves of~$G$ or
  the point~$v$; the degree of points in the interior of the
  half-cycles cannot decrease, and the degree of $u$ decreases exactly
  by one, and thus is at least $2$. The other claims follow directly
  from Lemma~\ref{lem:iso-identi}.
\end{proof}

For a leaf~$v$ of~$G$, let us call the other endpoint of the single
edge incident to~$v$ the \emph{neighbor}~$n(v)$, and let us
call~$d_G(v, n(v))$ the \emph{length}~$\lambda(v)$ of the leaf~$v$.
\begin{lemma}
  \label{lem:leaf-equal}
  From a metric graph~$G$, we can obtain a graph~$G'$ with
  $\leaves(G') = \leaves(G)$, $\cyc(G') = \cyc(G)$, $\diam(G') \leq
  \diam(G)$, $\len(G') \geq \len(G)$, and in~$G'$ all leaves have
  equal length and share the same neighbor.
\end{lemma}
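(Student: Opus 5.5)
The plan is to cut off every leaf edge and reattach all of them, with a common length, at one carefully chosen point of the remaining graph. Let the leaves of~$G$ be $v_1,\dots,v_\ell$, and write $\lambda_i=\lambda(v_i)$ and $D=\diam(G)$.

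\emph{Trivial cases and the tree case.} If $\ell\le 1$ there is nothing to prove. If $\cyc(G)=0$, take $G'$ to be the star with $\ell$ edges of length $D/2$. It has $\ell$ leaves, diameter $D$, and length $\ell D/2\ge\len(G)$ by Lemma~\ref{lem:len-diam-tree}. So from now on assume $\ell\ge2$ and $\cyc(G)\ge1$.

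\emph{Construction.} Let $H$ be $G$ with the leaf edges removed, keeping the neighbors $n(v_i)$. Let $v$ be a leaf of maximum length $\lambda_{\max}$, and put $c=n(v)$. Let $\bar\lambda=\frac1\ell\sum_i\lambda_i$, and define $G'$ as $H$ with $\ell$ new edges of length $\bar\lambda$ attached at~$c$.
\begin{itemize}
\item Length: $\len(G')=\len(H)+\ell\bar\lambda=\len(G)$.
\item Cyclomatic number: it is unchanged, since we only deleted and added pendant edges.
\item Leaves: the new leaves are exactly the $\ell$ attached endpoints, provided $H$ itself has no leaves (see the last paragraph).
\end{itemize}

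\emph{Diameter.} The key observation is that a leaf edge is never in the interior of a shortest path between two points of~$H$. Hence $d_{G'}=d_H=d_G$ on $H$, so pairs of points of $H$ are fine. For the remaining pairs:
\begin{itemize}
\item Two points on new edges: their distance is at most $2\bar\lambda$. For $i\ne j$ we have $\lambda_i+\lambda_j\le d_G(v_i,v_j)\le D$. Averaging over all pairs $i\ne j$ (valid as $\ell\ge2$) gives $2\bar\lambda\le D$.
\item A point on a new edge and a point $x\in H$: their distance is at most $\bar\lambda+d_H(c,x)\le\lambda_{\max}+d_G(n(v),x)=d_G(v,x)\le D$.
\end{itemize}
So $\diam(G')\le D$, and every leaf of $G'$ has length $\bar\lambda$ and neighbor~$c$.

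\emph{Main obstacle: new leaves in~$H$.} Deleting the leaf edges may create new leaves in~$H$. This happens when a vertex has all but one of its incident edges going to leaves, or more generally when whole pendant trees hang off the cyclic part. Then $\leaves(G')$ would be too large. To deal with this, I would replace ``leaf edges'' by \emph{pendant trees}. Let $K$ be the core, the minimal connected subgraph containing all cycles; it has no leaves because $\cyc(G)\ge1$. Group the leaves according to the attachment point $a\in K$ of their pendant tree~$T_a$.

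Inside each $T_a$, I would first apply the tree argument rooted at~$a$. Replace $T_a$ by a star at~$a$ whose $\ell_a$ edges all have length equal to the largest distance $h_a$ from $a$ to a leaf of~$T_a$. This does not decrease the length, since $T_a$ is covered by the $\ell_a$ root-to-leaf paths, each of length at most $h_a$. It does not increase the diameter, since all distances are bounded by the corresponding distances to the farthest leaf of~$T_a$. After this step $H=K$, and the argument above applies verbatim.

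The delicate point I expect to check carefully is the pairwise bound $\lambda_i+\lambda_j\le D$ for two leaves lying in the same pendant star. It still holds: before the replacement, two distinct subtrees, or a farthest leaf together with another leaf, witness the bound. If $T_a$ has only one leaf, the star is a single edge and the bound is the same as before.
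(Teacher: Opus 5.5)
\textbf{There is a genuine gap, in the pendant-tree repair.} Your main construction is sound when deleting the leaf edges creates no new leaves, and averaging ($2\bar\lambda\le D$, $\bar\lambda\le\lambda_{\max}$) is a valid way to equalize lengths. The problem is the repair itself. Replacing $T_a$ by a star at $a$ with all edges of length $h_a$ can increase the diameter, because leaves that are close together deep inside $T_a$ end up at distance $2h_a$.

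For instance, let $G$ consist of a cycle of length $1$, a point $a$ on it, an edge $ab$ of length $10$, and two leaf edges $bv_1,bv_2$ of length $1$. Then $\len(G)=13$ and $\diam(G)=d_G(v_1,\bar a)=23/2$, where $\bar a$ is the antipode of $a$ on the cycle. But $h_a=11$, so your star has two leaves at distance $22$. Your justification of the ``delicate point'' fails here: $T_a$ has a single branch at $a$, and the farthest leaf and the other leaf are at distance only $2$.

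No other choice of lengths rescues the approach. Any graph made of this cycle plus two pendant edges of a common length $\mu$ at one point of the cycle, with length at least $13$, needs $\mu\ge 6$. Its diameter is then at least $12>23/2$. So in general the common neighbor cannot be placed on the core $K$. In this example $G$ itself already satisfies the lemma, with common neighbor $b$ sitting at the end of a pendant path.

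\textbf{How the paper avoids this.} The paper never deletes all leaf edges at once. While two leaves have $n(v_1)\ne n(v_2)$ and $\lambda(v_1)\ge\lambda(v_2)$, it detaches $v_2$ and reattaches it at $n(v_1)$.
\begin{itemize}
\item Since there are no degree-two vertices, $n(v_2)$ has degree at least three, so no new leaf is created. A vertex of degree two may appear; suppressing it can lengthen another leaf edge but creates no leaf.
\item The length, the cyclomatic number and the number of leaves are unchanged.
\item The diameter does not increase, by the same two estimates you used.
\end{itemize}
Only once all leaves share a neighbor are the lengths equalized. The paper uses $\tfrac12(\lambda(v_1)+\lambda(v_2))$ for the two longest leaves. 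Your $\bar\lambda$ would also work at that stage, since the only possible leaf of $H$ is then the common neighbor itself, which receives the new edges.
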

\begin{proof}
  If $G$ contains two leaves~$v_1$ and~$v_2$ with~$n(v_1) \neq
  n(v_2)$, then assume that~$\lambda(v_1) \ge \lambda(v_2)$.  We can
  simply remove the leaf~$v_2$ from~$n(v_2)$ and reattach it
  at~$n(v_1)$. This operation does not change the length, the cyclomatic
  number or the number of leaves. It may change the diameter, but it
  does not increase it. We repeat this process until all leaves 
  have the same neighbor.
 
  Let now~$v_{1}, \dots, v_{\ell}$ be the~$\ell = \leaves(G)$ leaves
  ordered such that~$\lambda(v_1) \geq \lambda(v_2) \geq \dots \geq
  \lambda(v_\ell)$. 
  Set $\lambda = \frac 12 (\lambda(v_1) +
  \lambda(v_2))$ and note that $\lambda(v_i)\le \lambda$ for all $3\le i\le \ell$.  
  We can now change all leaves to have
  length~$\lambda$: this cannot increase the diameter and can only
  increase the length. The resulting graph is $G'$.
\end{proof}

We now have the tools we need to tackle Theorem~\ref{thm:len-diam}.
\begin{proof}[Proof of Theorem~\ref{thm:len-diam}]
  We proceed by induction over~$\cyc(G)$. For~$\cyc(G) = 0$, the
  theorem holds by Lemma~\ref{lem:len-diam-tree}, so we assume that we
  are given a graph~$G$ with~$k := \cyc(G) > 0$ and that the claimed
  bound holds for all graphs with cyclomatic number smaller
  than~$k$. We let~$D := \diam(G)$ and distinguish two cases.

  If~$\leaves(G) < 2$, we let~$C$ be a shortest cycle in~$G$.  This
  implies that~$\len(C) \leq 2D$. We apply Lemma~\ref{lem:identi-cycle}
  with the cycle~$C$.  The resulting graph~$G'$ has $\cyc(G') = k - 1$
  and $\leaves(G') \leq \leaves(G) + 1 \leq 2$, so by the inductive 
  assumption we have
  \[
  \len(G') \leq \big( \cyc(G') + \max\{1, \leaves(G')/2\}\big) \cdot \diam(G')
  = k \cdot \diam(G').
  \]
  It follows that
  \[
  \len(G) = \len(G') + \len(C)/2 \leq k \cdot \diam(G') + D
  \leq k \cdot D + D = (k + 1) \cdot D,
  \]
  completing the proof for this case.

  Otherwise, $\leaves(G) \geq 2$.  By Lemma~\ref{lem:leaf-equal}, we
  can assume that all leaves~$v_1, \dots, v_{\ell}$, where~$\ell =
  \leaves(G)$, have equal length~$\lambda$ and share the same
  neighbor~$x := n(v_i)$.
  Let $G^\circ$ be the part of~$G$ without
  the leaves and let~$z$ be a point of~$G^\circ$ maximizing the
  distance from~$x$; see Figure~\ref{fig:lambda-medusa}(a). Since~$G^\circ$ 
  has no leaves, $z$ is not a leaf. Take a sufficiently small $\eps>0$ 
  and let $\pi_\eps(z)$ be a path in $G$ of length $\eps$ with $z$ 
  in its interior. Shortest paths from $x$ to points in $\pi_\eps(z)$ 
  do not go through $z$ because $z$ maximizes the distance from $x$. 
  It follows that there must be at least 
  two distinct shortest paths from~$x$ to~$z$, one through each side of 
  $\pi_\eps(z)\setminus \{z\}$. 
  Let~$s$ be the \emph{last} common vertex on these
  two paths, and let~$C$ be the cycle formed by the two paths
  between~$s$ and~$z$.  We have~$d_G(v_1, z) = \lambda + d_G(x, s) + \len(C)/2
  \leq D$, implying that $\lambda + \len(C)/2 \leq D$. 

  We obtain a graph~$G'$ from~$G$ by applying
  Lemma~\ref{lem:identi-cycle} with the cycle~$C$ and also removing
  the leaf~$v_2$.  We have $\cyc(G') = k - 1$, $\leaves(G') \leq
  \leaves(G)$, and $\len(G') = \len(G) - {\len(C)}/{2} - \lambda$.  By the
  inductive assumption and since $\leaves(G) \geq 2$, we have
  \[
  \len(G') \leq \Big( \cyc(G') + \max\big\{1, \frac{\leaves(G')}{2}\big\}\Big) \cdot \diam(G')
  \leq \big( k - 1 + \frac{\leaves(G)}{2} \big) \cdot \diam(G')
  \]
  Using $\diam(G') \leq D$ and $\lambda + \len(C)/2 \leq D$ we now have
  \[
  \len(G) = \len(G') + \frac{\len(C)}{2} + \lambda
  \leq \len(G') + D
  \leq \big( k - 1 + \frac{\leaves(G)}{2} \big) \cdot D + D
  = \big( k + \frac{\leaves(G)}{2} \big) \cdot D,
  \]
  completing the proof.
\end{proof}

\begin{figure}[t]
  \centerline{\includegraphics[width=.9\textwidth]{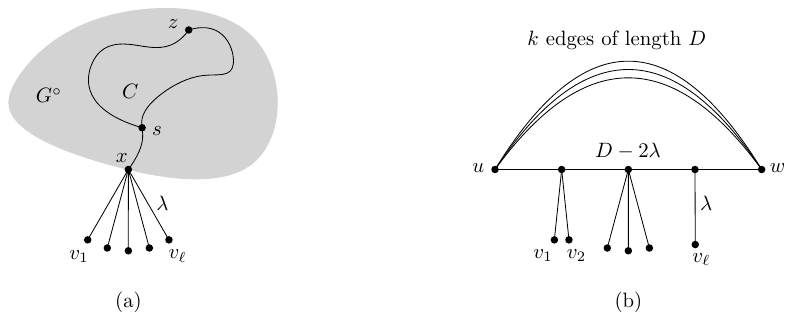}}
  \caption{(a) Proof of Theorem~\ref{thm:len-diam}. (b)
    $\lambda$-Medusa graph with diameter $D$, cyclomatic number $k$,
    and $\ell$ leaves.}
  \label{fig:lambda-medusa}
\end{figure}

We will now study the family of graphs for which equality holds in
Theorem~\ref{thm:len-diam}.  A \emph{$\lambda$-Medusa graph} with
diameter~$D$, cyclomatic number~$k$, and~$\ell$ leaves is constructed
as follows: Let $uw$ be an edge of length~$D - 2\lambda$. Create~$k$
(parallel) edges of length~$D$ connecting~$u$ and~$w$, and, finally,
add~$\ell$ leaves by adding edges of length~$\lambda$ with one
endpoint located on the edge~$uw$. See Figure~\ref{fig:lambda-medusa}(b).

It is easy to see that for $0 \leq \lambda \leq D/2$, a
$\lambda$-Medusa graph~$M_{\lambda}$ has diameter at most~$D$.
For~$\lambda = 0$, $M_0$~has no leaves, and simply consists of $k+1$
parallel edges of length~$D$, so $\len(M_0) = (k+1)D$.  For~$\lambda =
D/2$, the edge~$uw$ degenerates to a point and all leaves are attached
to this point---in other words, all leaves have a common neighbor and
have length~$D/2$, so we have~$\len(M_{D/2}) = (k + \ell/2) D$.  For
$0 < \lambda < D/2$ and~$\ell = 2$, the $\lambda$-Medusa
graph~$M_{\lambda}$ has again length~$\len(M_\lambda) = (k + 1) D$.
The $D/2$-Medusa graph for $k = 0$ is a star with
$\ell$~edges of equal length~$D/2$, the $0$-Medusa graph for~$k=1$ is a cycle of
length~$2D$.

We will need a small observation:
\begin{lemma}
  \label{lem:long-edges}
  In a metric graph~$G$ of diameter~$D$, two edges of length~$D$
  must share both endpoints.
\end{lemma}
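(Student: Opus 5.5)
The plan is to argue by contradiction, using only the fact that the midpoint of an edge of length~$D$ is far from everything on the other side of the graph. Let $e=ab$ and $f=cd$ be two distinct edges of length~$D$ in~$G$, and let $m_e$ be the midpoint of~$e$. I will first record the basic distance formula: for any point $y$ not in the interior of~$e$, a shortest path from $m_e$ to $y$ leaves~$e$ through $a$ or through~$b$. Hence
\[
d_G(m_e,y)=\min\{D/2+d_G(a,y),\,D/2+d_G(b,y)\}.
\]
The same formula holds for $m_f$ with respect to $c,d$. A side remark: a loop of length~$D$ would be handled the same way, with $a=b$.

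The main step is to evaluate the distance between the two midpoints. Any path from $m_e$ to $m_f$ must exit $e$ through one of $a,b$ and enter $f$ through one of $c,d$. Therefore
\[
d_G(m_e,m_f)=D+\min_{p\in\{a,b\},\,q\in\{c,d\}} d_G(p,q).
\]
Since this distance is at most~$D$, some endpoint of~$e$ coincides with some endpoint of~$f$; say $a=c$. Care is needed for points inside~$f$ versus the endpoints, but the formula is exact because $f$ is a single edge and $m_f$ is reachable only through $c$ or~$d$.

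Next I will show that the other endpoints coincide as well. Consider the point $y$ on~$f$ at distance $D/2-\eps$ from~$c=a$ toward~$m_f$, or simply use $m_f$ in a refined way. For a point $y$ on $f$ at distance $t$ from $c$, we have $d(y,c)=t$ and $d(y,d)=D-t$, and paths from $y$ leave $f$ through $c$ or $d$. Take instead the point $p$ on $e$ at distance $\eps$ from~$m_e$ toward~$b$, and the point $q$ on~$f$ at distance $\eps$ from~$m_f$ toward~$d$. Any path from $p$ to $q$ via $a=c$ has length $(D/2+\eps)+(D/2+\eps)=D+2\eps>D$. So the shortest path must use $b$ or~$d$. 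The possible routes give lengths
\begin{itemize}
\item $(D/2-\eps)+d_G(b,d)+(D/2-\eps)$ through $b$ and~$d$;
\item $(D/2-\eps)+d_G(b,c)+(D/2+\eps)$ through $b$ and~$c$;
\item $(D/2+\eps)+d_G(a,d)+(D/2-\eps)$ through $a$ and~$d$.
\end{itemize}
Each of the last two is at least $D$, with equality only if $b=c=a$ or $d=a$ respectively. So if the diameter bound $\le D$ is to hold for all small $\eps$, we need either $d_G(b,d)\le 2\eps$ for all small~$\eps$, meaning $b=d$, or some degenerate coincidence making one of $e,f$ a loop at~$a$. The loop cases are closed off similarly by swapping roles. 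In all cases $\{a,b\}=\{c,d\}$.

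The main obstacle I expect is bookkeeping the degenerate configurations: loops, or $b=c$ while $d\ne a$. For these I would perturb the points slightly differently, placing $p$ and $q$ on the appropriate sides of the midpoints. The aim is that every candidate route exceeds~$D$ unless the two endpoint pairs agree.
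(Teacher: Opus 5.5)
Your argument is essentially the paper's proof. The midpoint distance $D+\min_{p\in\{a,b\},q\in\{c,d\}}d_G(p,q)$ forces a shared endpoint $a=c$. Then two points pushed a little past the midpoints toward the unshared endpoints can only be joined within length $D$ through $b$ and $d$, which forces $b=d$. The paper uses the concrete offset $\delta/3$ with $\delta=d_G(v,w)$ where you let $\eps\to 0$. For two edges that are not loops, your argument is complete and correct.

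The part that does not work as written is the degenerate case you flag at the end: $e$ is a loop at $a$ and $f=ad$ with $d\neq a$. Neither ``swapping roles'' nor choosing the sides of the perturbations closes it. A point of the loop at distance $\eps$ from its antipode lies within $D/2-\eps$ of $a$, and every path out of it passes through $a$. A point of $f$ moved by $\eps$ from $m_f$ lies within $D/2+\eps$ of $a$. So their distance is at most $D$ whatever orientations you pick, as long as both points move by the same $\eps$.

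The missing idea is not to perturb on the loop. Take the antipode $m_e$ itself, at distance exactly $D/2$ from $a$, and take $q\in f$ at distance $D/2+\eps$ from $a$ along $f$. With $\delta=d_G(a,d)\in(0,D]$ and $0<\eps<\min\{\delta,D/2\}$,
\[
d_G(m_e,q)=\tfrac{D}{2}+\min\bigl\{\tfrac{D}{2}+\eps,\ \tfrac{D}{2}-\eps+\delta\bigr\}=D+\min\{\eps,\delta-\eps\}>D.
\]
The paper's proof tacitly excludes loops too: in this configuration its two points at distance $D/2+\delta/3$ from $u$ are at distance exactly $D$. So this is a gap only in the degenerate case, not in your main argument.
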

\begin{proof}
  If the two edges do not share any endpoint, then their midpoints
  have distance larger than~$D$, a contradiction. Assume now that the
  edges are~$uv$ and~$uw$ with shared endpoint~$u$, and let~$\delta =
  d_G(v,w) > 0$. Let~$x$ and~$y$ be points on~$uv$ and~$uw$ at
  distance~$D/2 + \delta/3$ from~$u$.  Then $d_G(x, y) > D$. 
\end{proof}

\begin{theorem}
  \label{thm:medusa}
  The metric graphs~$G$ with
  \[
  \len(G) = \Big(\cyc(G) + \max\big\{1,
  \frac{\leaves(G)}{2}\big\}\Big) \cdot \diam(G)
  \]
  are exactly the following:
  \begin{itemize}
  \item the $0$-Medusa graphs (which have no leaves),
  \item the $\lambda$-Medusa graphs for $0 < \lambda < D/2$ with two
    leaves, and
  \item the $D/2$-Medusa graphs with two or more leaves.
  \end{itemize}
\end{theorem}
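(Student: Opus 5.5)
We prove the two directions separately. For sufficiency, we compute $\len$ and $\diam$ of each listed family. A $\lambda$-Medusa graph with $0\le\lambda\le D/2$ has diameter at most $D$, as observed above. For the $0$-Medusa graph this is attained: the midpoints of two parallel edges (or the antipode on the cycle when $k=1$) are at distance $D$. In the other two families, two leaves are at distance $D$. The lengths were already computed: $(k+1)D$ for $\lambda=0$ and for $0<\lambda<D/2$ with $\ell=2$, and $(k+\ell/2)D$ for $\lambda=D/2$. In each case this equals the right-hand side of Theorem~\ref{thm:len-diam}.

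For necessity, we induct on $k=\cyc(G)$ and retrace the proof of Theorem~\ref{thm:len-diam}, requiring every inequality used there to be tight. The base case $k=0$ is the tree characterization after Lemma~\ref{lem:len-diam-tree}. For $\ell=2$ we need one extra check: a path of length $D$ is the $\lambda$-Medusa graph with $k=0$, $\ell=2$ for any $\lambda$, so it lies in the list. For $k>0$ we split into the same two cases.

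\emph{Case $\leaves(G)<2$.} Equality forces $\len(C)=2D$ for a shortest cycle $C$, and $\len(G')=k\,\diam(G')$ with $\diam(G')=D$. By induction, $G'$ is a Medusa graph with cyclomatic number $k-1$ and $\leaves(G')\le 2$. We then reconstruct $G$ by ``un-identifying'' the half-cycle of length $D$ from $u$ to the antipode $v$. Every cycle has length at least $2D$ while $\diam(G)=D$, and Lemma~\ref{lem:long-edges} forces all edges of length $D$ to be parallel between the same two endpoints. Together these should pin $G$ down to $M_0$. We also expect to show that in this case $G'$ actually has a leaf at $v$ unless $G$ is a cycle, which excludes the configurations where $\ell\le1$ but $G'$ has two leaves.

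\emph{Case $\leaves(G)\ge2$.} The reduction of Lemma~\ref{lem:leaf-equal} is not reversible, so we cannot simply assume its conclusion. Instead we redo the tightness argument directly on $G$, taking care of unequal leaf lengths and distinct neighbours. First assume $\ell\ge3$. Equality in the double-counting proof of Lemma~\ref{lem:len-diam-tree}, applied along the inductive chain, forces every pair of leaves to be at distance exactly $D$, so the leaf lengths $\lambda_i$ satisfy $\lambda_i+\lambda_j+d(n(v_i),n(v_j))=D$ for all $i\ne j$. We must also have $\lambda+\len(C)/2=D$, that is, $d(x,s)=0$ and $z$ is at distance exactly $D$ from every leaf. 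Induction gives that $G'$ is a $D/2$-Medusa graph, because $\ell(G')=\ell\ge3$ is incompatible with the other families. So after identification all leaves have length $D/2$ at one centre and the loops have length $D$. Un-identifying a half-cycle of length $D-\lambda$ then gives $\lambda=D/2$ and a new loop of length $D$ at the same centre. Our plan for dropping the normalising assumption of Lemma~\ref{lem:leaf-equal} is this: show that any reattachment that strictly decreases the diameter, or any strict lengthening of a leaf, would contradict tightness, so equality forces the original $G$ to be already normalised. For $\ell=2$ the bound is $(k+1)D$, and the same argument with $G'$ of type $\lambda$-Medusa with two leaves yields $M_\lambda$ for $\lambda=d(v_1,x)$. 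Here tightness should force the core to consist of $k$ edges of length $D$ plus the segment $uw$ of length $D-2\lambda$.

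The main obstacle is the reconstruction step: showing that reversing the path identification, together with tightness ($\diam(G)=D$, all cycles through the relevant vertex of length $2D$, Lemma~\ref{lem:long-edges}), leaves no freedom beyond the Medusa shape. This is especially delicate in the $\ell=2$, $0<\lambda<D/2$ case, where leaf attachment points may lie on $uw$ at different positions. We will handle it by a direct distance argument: a leaf tip and the midpoint of a length-$D$ parallel edge must be at distance at most $D$, which forces both attachment points to lie on $uw$.
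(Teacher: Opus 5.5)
Your framework (induction on $\cyc(G)$, rerunning the proof of Theorem~\ref{thm:len-diam} with every inequality tight, then undoing the path identification) is the paper's. However, the necessity direction is left as a plan, and several concrete steps in it would fail.

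In the case $\leaves(G)<2$ you never exclude $\leaves(G)=1$. The induction may then hand you a $G'$ with two leaves, a $\lambda$-Medusa graph, which you would have to rule out. Your remark that $G'$ ``actually has a leaf at $v$ unless $G$ is a cycle'' points the wrong way: in $M_0$ the antipode keeps degree $k\ge2$ after identification. The paper disposes of $\leaves(G)=1$ in one line. Attaching a second leaf of length $\eps$ at $n(v)$ increases the length, keeps the diameter at most $D$, and leaves the bound at $(k+1)D$. With $\leaves(G)=0$, $G'$ has at most one leaf, hence none. So if $G\neq C$, $G'$ is $k$ parallel edges of length $D$ between $u$ and $w$, and you still need the case split you omit. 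The un-identified path $\sigma$ is either some $e_i$, or it runs from the interior of $e_1$ through $w$ into the interior of $e_2$. The latter creates two lifts $w_1,w_2$ of $w$ and contradicts Lemma~\ref{lem:long-edges}. Girth $2D$, diameter $D$ and Lemma~\ref{lem:long-edges} alone do not pin $G$ down: the metric $K_{3,3}$ with unit edges has girth $4$ and diameter $2$ but is not a Medusa graph.

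For $\leaves(G)\ge2$, your plan to show that tightness forces $G$ to be already normalized is false when $\ell=2$. A $\lambda$-Medusa graph with $0<\lambda<D/2$ whose two leaves hang from different points of $uw$ is extremal, yet its leaves have different neighbors. Excluding diameter-decreasing reattachments only shows that the reattached graph is still extremal. The paper instead proceeds as follows:
\begin{itemize}
\item it normalizes by reattaching $v_2$ at $x=n(v_1)$;
\item it proves $\lambda(v_1)=\lambda(v_2)$ by an argument you lack: if $\lambda(v_1)\le D/2$, lengthen $v_2$; otherwise hang $v_2$ with length $D/2$ from the point of $v_1x$ at distance $D/2$ from $v_1$; both contradict Theorem~\ref{thm:len-diam};
\item it characterizes the normalized graph and moves $v_2$ back at the end.
\end{itemize}

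For $\ell\ge3$, equality in the double counting ``along the inductive chain'' is not available. That argument is for trees, and the chain discards $v_2$ and creates the new leaf $z$. Moreover, the relations $\lambda_i+\lambda_j+d(n(v_i),n(v_j))=D$ would not give equal lengths $D/2$ anyway. The paper just observes that deleting a leaf shorter than $D/2$ leaves $\ell-1\ge2$ leaves and violates Theorem~\ref{thm:len-diam}. Hence all leaves have length $D/2$ and a common neighbor, which reduces to $\ell=2$.

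Above all, the step you call the main obstacle is the substance of the proof, and it is absent. The paper's chain of steps is:
\begin{itemize}
\item from $\len(G')\le kD$ and $d_G(v_1,z)\le D$, one gets $\len(C)=2D-2\lambda$ and $x=s$;
\item by induction $G'$ is a $\lambda'$-Medusa graph whose leaves $v_1,z$ are at distance $D$, hence hang at $u$ and $w$;
\item $\lambda'\ge\lambda$;
\item the cases $k=1$, $\mu=\lambda'-\lambda=0$ and $\mu>0$ are each closed with Lemma~\ref{lem:long-edges}.
\end{itemize}
Your proposed substitute fails. A leaf of length $\lambda$ hung on a parallel edge at distance $a$ from $u$ is at distance $\lambda+D/2+\min\{a,D-a\}$ from the midpoint of any other parallel edge. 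This is at most $D$ whenever $\min\{a,D-a\}\le D/2-\lambda$. For $k=1$ there is no other parallel edge, and $uw$ is not even canonical.

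A smaller slip is in the sufficiency part. The two leaves of a $\lambda$-Medusa graph with $0<\lambda<D/2$ are at distance $D$ only if they hang at $u$ and $w$. In general the diameter is realized between a leaf tip and a cycle point, or it follows from Theorem~\ref{thm:len-diam}.
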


\begin{proof}
  It is easy to see that for all the listed graphs the equality holds.
  Thus, we focus on the other direction: if equality holds for~$G$,
  then $G$~is one of the listed graphs.
  
  Again, we proceed by induction over~$\cyc(G)$.  The case~$\cyc(G) =
  0$ follows from our discussion after Lemma~\ref{lem:len-diam-tree}.
  We therefore assume that we are given a
  graph~$G$ with~$k := \cyc(G) > 0$ that satisfies
  \[
  \len(G) = \big(k + \max\{1, \leaves(G)/2\}\big) \cdot D,
  \]
  where~$D := \diam(G)$, and that the claim holds for all graphs with
  cyclomatic number smaller than~$k$.
  
  Assume first that~$\leaves(G) = 1$, let~$v$ be this single
  leaf, and let~$n(v)$ be the neighbor vertex of $v$. 
  We can attach a second leaf of length~$\eps > 0$
  to~$n(v)$, obtaining a new graph of length greater than $(k +1) \cdot D$, 
  a contradiction to Theorem~\ref{thm:len-diam}. 
  Hence, $G$ cannot have exactly one leaf.

  We next assume that~$G$ has no leaves, so that $\len(G) = (k+1)D$.
  Let~$C$ be a shortest cycle in~$G$, which implies that $\len(C) \leq
  2D$.  If~$G = C$, then~$G$ is the $0$-Medusa graph with~$k = 1$.
  Otherwise, we consider the graph~$G'$ obtained from~$G$ by applying
  Lemma~\ref{lem:identi-cycle} with the cycle~$C$. We have
  $\cyc(G') = \cyc(G) - 1$ and
  \[
  \len(G') = \len(G) - \len(C)/2 = (k+1) D - \len(C)/2.
  \]
  By Theorem~\ref{thm:len-diam} we have $\len(G') \leq k D$,
  which implies~$\len(C) = 2D$ and $\len(G') = kD$. 
  Since $\leaves(G') \leq 1$, by the inductive assumption $G'$ is the $0$-Medusa graph 
  and actually has no leaves.  In fact, $G'$ consists of~$k$ parallel
  edges~$e_1, e_2, \dots, e_k$ of length~$D$ connecting two
  vertices~$u$ and~$w$.  To reconstruct~$G$, we need to undo the
  isometric path identification, that is, we replace a path~$\sigma$
  of length~$D$ by two paths of length~$D$.  If~$\sigma$ is identical to one of the
  edges~$e_i$, then~$G$ is again the $0$-Medusa graph, and we are
  done.  Otherwise, we can break symmetry and assume that the endpoints 
  of~$\sigma$ are a point~$x \in e_1$ and a point~$y \in
  e_2$, and that the vertex~$w$ of~$G'$ lies on~$\sigma$. There are
  then two vertices~$w_1, w_2 \in G$ with~$\pi(w_1) = \pi(w_2) = w$.
  (Recall that~$\pi$ is the projection from~$G$ to $G'$.)
  See Figure~\ref{fig:no-leaf-medusa}.
  The~$k-1$ edges~$e_2, e_3, \dots, e_k$ of~$G'$ are the images of
  edges that either connect~$u$ and~$w_1$ or~$u$ and~$w_2$ in~$G$. By
  Lemma~\ref{lem:long-edges}, we cannot have both types of edges, so
  we can assume that~$e_2,\dots,e_k$ connect~$u$ and~$w_1$. But
  then~$e_2$ and the edge $xy$ containing~$w_2$ contradict
  Lemma~\ref{lem:long-edges}, so this case cannot happen. 
  \begin{figure}[t]
    \centerline{\includegraphics[width=.4\textwidth]{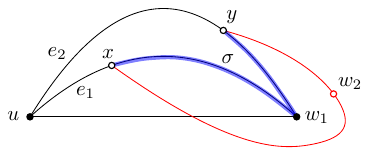}}
    \caption{Graph $G$ reconstructed from $G'$ when a path $\sigma$ has endpoints $x\in e_1$ and $y\in e_2$.}
    \label{fig:no-leaf-medusa}
  \end{figure}

  In the final case, we assume that~$G$ has $\ell \geq 2$ leaves and
  that $\len(G) = (k + \ell/2)D$.  If~$\ell \geq 3$, then all leaves
  must have length~$D/2$, as removing a leaf of shorter length would
  create a graph violating Theorem~\ref{thm:len-diam}.  This in turn
  implies that all leaves have the same neighbor~$x$.  We can remove
  all but two of the leaves and only need to characterize the
  case~$\ell = 2$. We can add the remaining $\ell-2$ leaves back to
  the graph to extend the characterization to the case~$\ell \geq 3$.

  In other words, we have to understand the case where $G$~has two
  leaves and~$\len(G) = (k+1)D$.  As in Lemma~\ref{lem:leaf-equal},
  let~$v_1$ and~$v_2$ be the two leaves with~$\lambda(v_1) \geq
  \lambda(v_2)$.  We can remove~$v_2$ from its neighbor and reattach
  it at~$x := n(v_1)$, the neighbor of $v_1$.  Assume for a
  contradiction that $\lambda(v_1) > \lambda(v_2)$. If $\lambda(v_1)
  \leq D/2$, then we can change the length of~$v_2$ to
  be~$\lambda(v_1)$, increasing the length of the graph in violation
  of Theorem~\ref{thm:len-diam}.  On the other hand, if~$\lambda(v_1)
  > D/2$, then we can reattach~$v_2$ at the point~$y$ on the
  edge~$v_1x$ with~$d_G(v_1, y) = D/2$ and increase its length
  to~$D/2$.  This again contradicts Theorem~\ref{thm:len-diam}.  It
  follows that $\lambda(v_1) = \lambda(v_2)$.

  We set~$\lambda := \lambda(v_1) = \lambda(v_2)$, and perform the
  same construction as in Theorem~\ref{thm:len-diam}: we identify a
  furthest point~$z \in G^{\circ}$ from~$x$, two shortest paths
  from~$x$ to~$z$, a last common point~$s$, a cycle~$C$ with antipodal
  points~$s$ and~$z$, and finally construct a graph~$G'$ by using
  Lemma~\ref{lem:identi-cycle} with the cycle~$C$, and additionally
  removing leaf~$v_2$.  More precisely, we identify the two half-cycles 
  of~$C$ between~$s$ and~$z$---this is possible because~$s$ has degree 
  larger than two, and will ensure that the only leaves of~$G'$ are~$v_1$
  and possibly~$z$.

  We have~$\len(G') = \len(G) - \len(C)/2 - \lambda = (k+1)D - \len(C)/2 -
  \lambda$, and since~$\cyc(G') = k-1$ and~$G'$ has at most two
  leaves, Theorem~\ref{thm:len-diam} implies that~$\len(G') \leq kD$,
  which gives us~$\len(C)/2 + \lambda \geq D$. Since~$d_G(v_1, z) =
  \lambda + \len(xs) + \len(C)/2 \leq D$, this tells us that~$\len(C) =
  2D-2\lambda$ and that~$x = s$.  So~$\len(G') = kD$, and by the
  inductive assumption,~$G'$ is a Medusa graph. Since~$G'$ has at least
  the leaf~$v_1$, it must actually have two leaves, so it is a
  $\lambda'$-Medusa graph with~$0 < \lambda' \leq D/2$.  The second
  leaf of~$G'$ is necessarily the point~$z$ of~$G$, so we
  have~$d_{G'}(v_1, z) = D$. This means that the two leaves are
  attached at the endpoints~$u$ and~$w$ of the base edge of the
  $\lambda'$-Medusa graph.

  If $k=1$, then~$G'$ is simply a path of length~$D$. Undoing the path
  identification, we immediately find that~$G$ is a $\lambda$-Medusa
  graph, so assume that~$k > 1$ and therefore~$\cyc(G') = k - 1 > 0$.

  We observe that the single edge~$v_1x$ of~$G$ is projected on a path
  of length~$\lambda$ in~$G'$ that cannot contain the vertex~$u$ of~$G'$ of degree
  three or higher, so we must have~$\lambda' \geq \lambda$.  Let~$\mu
  := \lambda' - \lambda$.

  We consider first the case~$\mu = 0$, so~$G'$ is a~$\lambda$-Medusa
  graph. Its base edge~$uw$ has length~$D-2\lambda$, there are~$k-1
  \geq 1$ parallel edges~$e_1, e_2, \dots, e_{k-1}$ of length~$D$
  connecting~$u$ and~$w$, the leaf~$v_1$ is attached to~$u$ by an edge
  of length~$\lambda$, and the leaf~$z$ is attached to~$w$ by an edge
  of length~$\lambda$.  Undoing the construction of~$G'$, we conclude
  that~$G$ has two edges~$uw_1$ and~$uw_2$ of length~$D-2\lambda$,
  $v_1$ and~$v_2$ are attached to~$u$, each by an edge of length~$\lambda$,
  and~$z$ is attached to both~$w_1$ and~$w_2$, each by an edge of
  length~$\lambda$. The other edges are of length~$D$ and connect~$u$
  with either~$w_1$ or~$w_2$. See Figure~\ref{fig:two-leaf-1}.
  \begin{figure}[t]
    \centerline{\includegraphics[width=.9\textwidth]{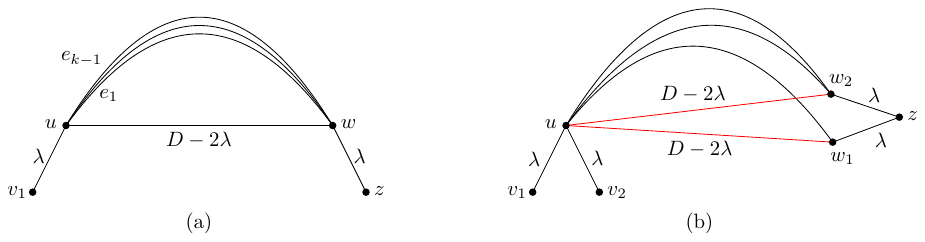}}
    \caption{(a) $G'$ and (b) its reconstruction $G$.}
    \label{fig:two-leaf-1}
  \end{figure}
  By Lemma~\ref{lem:long-edges}, we cannot have both types of edges,
  so we can assume that~$e_1,\dots,e_{k-1}$ connect~$u$ and~$w_1$.
  Then~$w_2$ and~$z$ are vertices of degree two, so in fact the
  path~$u w_2 z w_1$ is just another edge~$e_k$ of length~$D$ from~$u$
  to~$w_1$, and~$G$ is a $\lambda$-Medusa graph with base edge~$u
  w_1$.  (Note that when~$\lambda = D/2$, then~$u = w_1 = w_2$: this
  simplifies the argument, but the result is the same.)

  We turn to the case~$\mu > 0$, so~$G'$ is a~$(\lambda + \mu)$-Medusa
  graph.  Its base edge~$uw$ has length~$D - 2\lambda - 2\mu$, there
  are~$k-1 \geq 1$ parallel edges~$e_1, e_2, \dots, e_{k-1}$ of
  length~$D$ connecting~$u$ and~$w$, the leaf~$v_1$ is attached to~$u$
  by an edge of length~$\lambda + \mu$, and the leaf~$z$ is attached
  to~$w$ by an edge of length~$\lambda + \mu$.  We conclude that~$G$
  looks as follows: $G$ has two parallel paths $x u_1 w_1 z$ and~$x
  u_2 w_2 z$ of length~$D - \lambda$, where~$\len(xu_1) = \len(xu_2) = \mu$,
  $\len(u_1 w_1) = \len(u_2 w_2) = D - 2\lambda$, and~$\len(w_1 z) = \len(w_2 z) =
  \lambda + \mu$.  There are two leaves~$v_1, v_2$ attached to~$x$ by
  edges of length~$\lambda$.  The other edges are of length~$D$ and
  connect either~$u_1$ or~$u_2$ with either~$w_1$ or~$w_2$. See Figure~\ref{fig:two-leaf-2}.
  \begin{figure}[t]
    \centerline{\includegraphics[width=\textwidth]{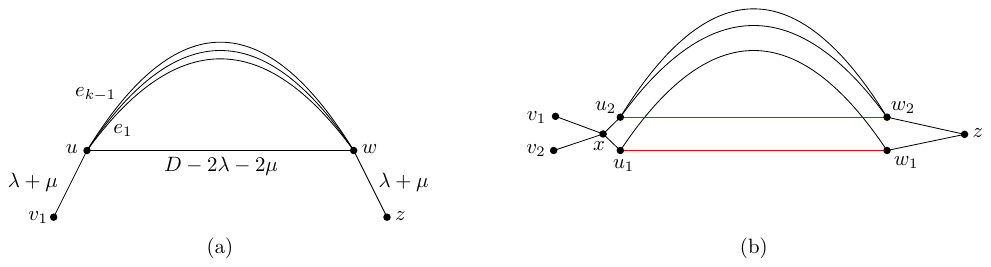}}
    \caption{(a) $G'$ and (b) its reconstruction $G$.}
    \label{fig:two-leaf-2}
  \end{figure}
  By Lemma~\ref{lem:long-edges}, we cannot have more than one type of
  these edges, so, breaking symmetry, we can assume
  that~$e_1,\dots,e_{k-1}$ connect either~$u_1$ and~$w_1$, or~$u_1$
  and~$w_2$.  Assume they connect~$u_1$ and~$w_1$.  Then the
  vertices~$w_2$ and $z$ are of degree two, so that the path~$u_2 w_2
  z w_1$ is in fact an edge of length~$D$ connecting~$u_2$ and~$w_1$,
  violating Lemma~\ref{lem:long-edges} with~$e_1$.  It follows
  that~$e_1, \dots, e_{k-1}$ connect~$u_1$ and~$w_2$.  Then~$w_1$
  and~$z$ are of degree two, so the path~$u_1 w_1 z w_2$ is in fact an
  edge~$e_k$ of length~$D$. On the other hand, the path~$u_1 x u_2
  w_2$ is of length~$\mu + \mu + (D - 2 \lambda - 2\mu) = D -
  2\lambda$, so our graph~$G$ is in fact a $\lambda$-Medusa graph with
  base edge~$u_1w_2$, with two leaves of length~$\lambda$ attached to
  the point~$x$ on this base edge. (Again note that when~$\lambda' =
  D/2$, then~$u_1 = w_1$ and~$u_2 = w_2$. It is still true that the
  edges~$e_1,\dots,e_{k-1}$ must connect~$u_1$ and~$u_2$, and the rest
  of the argument goes through unchanged.)

  In all cases we have now established that~$G$ is a $\lambda$-Medusa
  graph.  Recall that if~$G$ originally had at least three leaves,
  then all leaves have length~$D/2$, so~$\lambda = D/2$.  Then the
  base edge collapses to a point, all leaves are attached here.  On
  the other hand, if~$G$ originally had two leaves, then any leaf
  length~$0 < \lambda \leq D/2$ is possible.  If~$\lambda < D/2$, then
  we can move the second leaf back to its original position---any
  position on the base edge of the $\lambda$-Medusa graph is possible.
\end{proof}


\section{Applications}
\label{sec:applications}

Consider the metric graph~$G$ that consists of a single cycle of length~$L$. 
Clearly it has~$\diam(G) = L/2$.  Can we improve the diameter of~$G$ by
adding ``wormholes,'' that is, additional edges of length zero?

It is not difficult to see that adding a single wormhole cannot decrease 
the diameter of~$G$, but using two wormholes that connect antipodal points 
equally spaced on the cycle, we can improve the diameter to~$L/4$.
The question is: is this the best one can do with two wormholes?

Indeed it is, as we can see by considering the \emph{wormhole graph}~$G_w$, 
which we obtain from~$G$ by identifying the endpoints of each wormhole.
Clearly, $\len(G_w) = \len(G) = L$, $\leaves(G_w) = 0$, and $\cyc(G_w) = 3$, 
as each identification increases the cyclomatic number by one.
Theorem~\ref{thm:len-diam} now immediately gives us~$\diam(G_w) \geq L/4$.

The lower bound generalizes to any number of wormholes: For~$w$
wormholes, we have~$\len(G_w) \geq L / (w + 2)$.  For even~$w$, this
bound can indeed be achieved, by using $w/2$ wormholes each to
identify the corners of two regular~$(w/2+1)$-gons that are rotated by
$L/(w+2)$ with respect to each other. (For an odd number of wormholes,
we do not know what diameter is possible.  We conjecture that the
diameter achievable with~$2n+1$ wormholes is the same as for~$2n$
wormholes.)

What bounds are possible if we require the wormholes to be connected?
In that case, the $w$ wormholes serve to identify~$w+1$ points in~$G$,
which is the fusion of $w+1$ points.  The wormhole graph has a single
vertex with $w+1$ attached loops, whose lengths add up to~$L$.  This
implies that the best diameter that can be achieved this way
is~$L/(w+1)$. This implies that for even~$w$ vertex-fusion is strictly
less powerful than general wormholes.

We can ask the same question for general metric graphs~$G$: can we add
$w$~wormholes to~$G$ to reduce the diameter, and what diameter can be
achieved?  Theorem~\ref{thm:len-diam} gives us a lower bound on this
diameter, by observing that for the wormhole graph~$G_w$ it holds that
$\len(G_w) = \len(G)$, $\cyc(G_w) = \cyc(G) + w$, and $\leaves(G_w)
\leq \leaves(G)$ (it can be less when wormholes are placed at leaves).

When~$G$ is simply a path of length~$L$, the lower bound for~$w$
wormholes is~$\diam(G_w) \geq L / (w+1)$, and this diameter can indeed
be achieved by placing the wormholes such that~$G_w$ is a
$\lambda$-Medusa graph with two leaves, for $0 < \lambda \leq
L/(2w+2)$.  For~$\lambda = L/(2w+2)$ the solution corresponds to
vertex fusion, but for all other values of~$\lambda$ this is not the
case.

When~$G$ is a star consisting of~$s$ ``spikes'' of length one,
then~$\len(G) = s$ and $\diam(G) = 2$.  With $ns$ wormholes, for a
positive integer~$n$, the diameter can be decreased to $2/(2n+1)$; see
Figure~\ref{fig:star-optimal}. Note that in the displayed construction
the wormholes are connected, and thus it is fusion.  This is again
optimal as we can see by considering the wormhole graph~$G_w$:
$\len(G_w) = s$, $\leaves(G_w) = s$, $\cyc(G_w) = ns$, and so
$\diam(G_w) \geq 2/(2n+1)$.

\begin{figure}[t]
  \centerline{\includegraphics[]{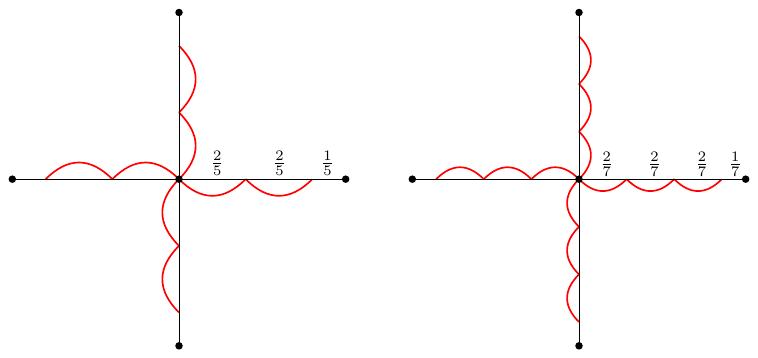}}
  \caption{Star (in black) with $s=4$ spikes of unit length and $ns$ 
    wormholes (in red) that reduce the diameter to $2/(2n+1)$. 
    On the left the case $n=2$, and in the right the case $n=3$}
  \label{fig:star-optimal}
\end{figure}

For the case when~$G$ is a star consisting of~$s$ spikes of length
one, and we can use at most $s-1$~wormholes, we have different ad-hoc
techniques to bound the diameter that can be achieved.

\begin{proposition}
  \label{pro:s-1-spikes}
  Let $G$ be a metric star with center~$c$ and~$s$ spikes~$cv_{i}$ of
  length one, for $i \in \{1, \dots, s\}$, and let $G_w$ be a graph
  obtained by adding $s-1$~wormholes to~$G$. Then for some~$v_{i}$ we
  have~$d_{G_{w}}(c, v_{i}) = 1$, implying~$\diam(G_w)\ge 1$. This
  bound is tight.
\end{proposition}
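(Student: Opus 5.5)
We prove the lower bound by a counting argument on a shortest-path tree, with wormholes treated as explicit zero-length edges, and then give a construction for tightness.

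\textbf{Setup.} Subdivide the spikes of $G$ at every wormhole endpoint. Model $G_w$ as the edge-weighted graph $H$ obtained from this subdivided star by adding, for each of the $s-1$ wormholes, an explicit edge of length zero between its two endpoints. Distances in $H$ coincide with distances in $G_w$.

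Fix a single shortest-path tree $T$ in $H$ rooted at $c$. It contains a shortest path from $c$ to every vertex, in particular to every leaf $v_i$. Orient every edge of $T$ away from $c$. Each wormhole edge that lies in $T$ then receives exactly one orientation. We say a wormhole edge \emph{enters} spike $i$ if its head lies on spike $i$ and is not $c$.

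\textbf{Key claim.} For each $i$ with $d_{G_w}(c,v_i)<1$, the tree path $P_i$ from $c$ to $v_i$ contains a wormhole edge entering spike $i$.

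To see this, look at the portion of $P_i$ after its last wormhole edge, or all of $P_i$ if it uses none. This portion consists of ordinary edges only, and it ends at $v_i$. In a star, ordinary edges can leave spike $i$ only through $c$. So this portion is either a walk inside spike $i$, or it passes through $c$ and then runs along spike $i$.

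\begin{itemize}
\item If $P_i$ uses no wormhole, or the final portion passes through $c$, then the final part of $P_i$ is the segment $cv_i$ itself, so $d_{G_w}(c,v_i)=1$.
\item Otherwise, the last wormhole edge of $P_i$ has its head on spike $i$ at a point different from $c$. By definition, that wormhole enters spike $i$.
\end{itemize}

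\textbf{Counting.} Each wormhole edge in $T$ has one head, so it enters at most one spike. Hence at most $s-1$ spikes are entered by some wormhole. Since there are $s$ spikes, some spike $i$ is entered by no wormhole. By the key claim, $d_{G_w}(c,v_i)\ge 1$.

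Because $G_w$ is a quotient of $G$, distances can only shrink, so $d_{G_w}(c,v_i)\le d_G(c,v_i)=1$. Therefore $d_{G_w}(c,v_i)=1$, and consequently $\diam(G_w)\ge 1$.

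\textbf{Main obstacle.} The step that needs care is ensuring that one wormhole cannot be counted for two spikes. The same zero-length edge might appear with opposite directions in shortest paths to different leaves. Using a single fixed tree $T$, rather than separate shortest paths for each leaf, removes this problem: each wormhole edge has exactly one orientation in $T$. A second point to check is that a wormhole with an endpoint at $c$ is handled correctly. It can enter only the spike carrying its other endpoint, so it is counted at most once.

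\textbf{Tightness.} Use the $s-1$ wormholes $\{v_1,v_2\},\dots,\{v_{s-1},v_s\}$. This fuses all leaves into a single point $v$, and $G_w$ becomes $s$ parallel edges of length $1$ between $c$ and $v$. Now take any two points $x,y$ of $G_w$. If $x$ is at distance $a$ from $c$ along its edge and $y$ is at distance $b$ from $c$ along its edge, then
\[
d_{G_w}(x,y)\le \min\{a+b,\,(1-a)+(1-b)\}\le 1,
\]
since the two quantities in the minimum sum to $2$. So $\diam(G_w)=1$, and the bound is attained.
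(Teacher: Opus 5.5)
Your proof is correct. It uses the same framework as the paper: a single shortest-path tree $T$ from $c$ with wormholes as zero-length edges, a pigeonhole between $s$ leaves and $s-1$ wormhole edges, and the same tightness example. Your charging step, however, is different.

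The paper takes, for each $i$, the part $\gamma'_i$ of the tree path from the last leaf preceding $v_i$ up to $v_i$. It asserts that these $s$ subpaths are disjoint, and concludes that one of them avoids all wormholes and so has length at least $1$. You instead charge each tree wormhole edge to the spike containing its child endpoint. Then you use the wormhole-free tail after the last wormhole to show that every leaf with $d_{G_w}(c,v_i)<1$ has its own spike charged.

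What your version buys is a genuine injection. The disjointness asserted in the paper fails in general: tree paths to different leaves can share an initial segment, wormhole edges included, without passing through another leaf. For instance, take $s=3$, one wormhole joining the points at distance $0.3$ on $cv_1$ and $0.5$ on $cv_2$, and another joining the points at distance $0.7$ on $cv_2$ and $0.9$ on $cv_3$. The unique tree paths to $v_2$ and $v_3$ both use the first wormhole and contain no other leaf, so $\gamma'_2$ and $\gamma'_3$ overlap. The paper's conclusion still holds in this example ($d_{G_w}(c,v_1)=1$). Your head-based assignment is what justifies it in general, so your argument is best read as a repair of that step rather than merely an alternative.

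A minor simplification is available. Tree paths from the root are simple and no head equals $c$, so the tail after the last wormhole can never pass through $c$. Hence your first case occurs only when $P_i$ uses no wormhole at all.
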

\begin{proof}
  By identifying~$s$ leaves~$v_{1}, \dots, v_{s}$ into a single point,
  which requires $s-1$~wormholes, we obtain a collection of $s$
  parallel edges of unit length. This graph has diameter~$1$ (it is
  actually the $0$-Medusa graph). This shows that the bound is tight.
	
  It remains to prove the lower bound. We prefer here to
  consider~$G_w$ as being obtained from $G$ by adding $s-1$
  ``wormhole'' edges of zero length.  Let~$T$ be a shortest-path tree
  with origin~$c$ for~$G_{w}$.  For each~$v_{i}$, consider the
  path~$\gamma_{i}$ from~$c$ to~$v_{i}$ in~$T$.  Let~$\gamma'_{i}$ be
  the part of~$\gamma_{i}$ from the last leaf preceding~$v_{i}$ up
  to~$v_{i}$ (if there is no other leaf on~$\gamma_{i}$,
  then~$\gamma'_{i} = \gamma_{i}$).  There are~$s$ disjoint
  paths~$\gamma'_{i}$ in~$T$, so one of them cannot contain a wormhole
  edge. For this~$\gamma'_{i}$ we therefore have~$\len(\gamma'_{i})
  \geq 1$, implying~$d_{G_{w}}(c, v_{i}) \geq 1$.
\end{proof}

\begin{proposition}
  \label{pro:s-2-spikes}
  Let $G$ be a metric star consisting of~$s$ spikes of length one and
  let $G_w$ be a graph obtained by adding~$s-2$ wormholes to~$G$.
  Then $\diam(G_w)=\diam(G)=2$.
\end{proposition}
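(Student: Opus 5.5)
The plan is to prove $\diam(G_w)\le 2$ and $\diam(G_w)\ge 2$ separately. The upper bound is immediate from Lemma~\ref{lem:iso-identi}(i), or directly: adding zero-length edges never increases distances, so $\diam(G_w)\le\diam(G)=2$. For the lower bound, I will find two leaves $v_i,v_j$ of the star such that every path between them in $G_w$ passes through the center $c$ and $d_{G_w}(c,v_i)=d_{G_w}(c,v_j)=1$. These together give $d_{G_w}(v_i,v_j)=2$.

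To find such leaves, I view each wormhole as a zero-length edge and discard wormholes with both endpoints at $c$, since they have no effect. Every remaining wormhole has an endpoint in the interior of some spike, or at its leaf.
\begin{itemize}
\item For each spike $i$, let $S_i$ be the spike $cv_i$ minus the point $c$.
\item Define an auxiliary multigraph $H$ on the vertex set $\{1,\dots,s\}$ as follows. A wormhole joining a point of $S_i$ to a point of $S_j$ gives an edge $ij$ (a loop if $i=j$). A wormhole joining $c$ to a point of $S_i$ gives a loop at $i$.
\end{itemize}
Then $H$ has at most $s-2$ edges, and every wormhole is charged to exactly one connected component of $H$.

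Let $A_1,\dots,A_r$ be the components of $H$, with $a_t$ vertices and $e_t$ edges. Since $\sum_t a_t=s$ and $\sum_t e_t\le s-2$, at least two components satisfy $e_t\le a_t-1$. Otherwise all but at most one component would have $e_t\ge a_t$ and the remaining one $e_t\ge a_t-1$ (being connected), giving $\sum e_t\ge s-1$. Fix two such components $A$ and $B$, and let $U_A$ be the union of $c$ and the sets $S_i$ for $i\in A$.

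Two facts about $U_A$ then finish the argument.
\begin{itemize}
\item \emph{Every path from $U_A\setminus\{c\}$ to a point outside $U_A$ passes through $c$.} Indeed, no wormhole joins $S_i$ and $S_j$ for $i\in A$, $j\notin A$, so the only other connection is through $c$. Hence every path from a leaf of $A$ to a leaf of $B$ passes through $c$.
\item \emph{Every shortest path from $c$ to a point of $U_A$ stays in $U_A$.} A path that leaves $U_A$ must return through $c$, so it can be shortcut. Therefore $d_{G_w}(c,x)$ for $x\in U_A$ equals the distance in the metric graph $U_A$ with the wormholes charged to $A$.
\end{itemize}
That metric graph is a star with $a=|A|$ unit spikes and at most $a-1$ wormholes. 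Proposition~\ref{pro:s-1-spikes} then gives a leaf $v_i$, $i\in A$, with $d_{G_w}(c,v_i)=1$. (Having fewer than $a-1$ wormholes only helps, as we may add dummy wormholes; the case $a=1$ is trivial.) The same holds for some leaf $v_j$ with $j\in B$, so $d_{G_w}(v_i,v_j)=d_{G_w}(v_i,c)+d_{G_w}(c,v_j)=2$.

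The step I expect to need the most care is the bookkeeping of wormholes incident to $c$. They must be charged to one component so that the counting inequality still holds. At the same time, the restriction to $U_A$ must include all wormholes that could shorten distances from $c$ within $U_A$. Charging each such wormhole as a loop at its spike handles both requirements.
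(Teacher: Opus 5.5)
Your proposal is correct and follows essentially the same route as the paper. Both build the multigraph $H$ with loops for wormholes touching $c$, count to get two components with at most $|V|-1$ edges, use that $c$ separates the corresponding parts of $G_w$, and apply Proposition~\ref{pro:s-1-spikes} to each part. You also make explicit a few points the paper leaves implicit: the upper bound $\diam(G_w)\le 2$, discarding wormholes with both endpoints at $c$, using $S_i$ to remove the ambiguity at $c$, and padding with dummy wormholes when a component has fewer than $a-1$ of them.
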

\begin{proof}
  Let again $c$ be the center and let $v_1,\dots,v_s$ be the leaves
  of~$G$.  Consider the combinatorial multi-graph $H$ with vertex set
  $\{1,\dots,s\}$, with an edge between~$i$ and~$j$ for each wormhole
  identifying a point on~$c v_i$ with a point on~$c v_j$. If a
  wormhole identifies the center~$c$ with a point on~$c v_i$, we add
  to~$H$ a loop at~$i$.  The graph $H$ has~$s$ vertices and exactly
  $s-2$ edges; it may have parallel edges, loops, and parallel loops.
  It follows that $H$ has at least two connected components $H^1$ and
  $H^2$ such that $|E(H^i)|\le |V(H^i)|-1$ (for $i=1,2$). Note that
  such a component may consist of an isolated vertex.
	
  For $i=1,2$, let $G^i_w$ be the subgraph of~$G_w$ given by the
  spikes corresponding to~$V(H^i)$, and the wormholes between
  them. Since~$c$ is the only common point between~$G^i_w$ and the
  rest of~$G_w$, we have
  \begin{align}
    \forall x_1, x_{2}\in G^i_w: ~~~
    & d_{G_w}(x_1,x_2) = d_{G^{i}_w}(x_1,x_2) \\
    \forall x_1\in G^1_w,~ x_2\in G^2_w: ~~~
    & d_{G_w}(x_1,x_2) = d_{G_w}(x_1,c)+ d_{G_w}(c,x_2)
    = d_{G^1_w}(x_1,c)+ d_{G^2_w}(c,x_2).	
    \label{eq:s-2-wormholes}
  \end{align}
  Note that (for $i=1,2$) the graph $G^i_w$ is a wormhole graph
  obtained from a star with $s_i=|V(H_i)|$ spikes of unit length and
  $|E(H_i)|\le s_i-1$ wormholes.  By Proposition~\ref{pro:s-1-spikes}
  there is a leaf~$v_i$ such that $d_{G^i_w}(c,v_i)=1$. It follows
  from Equation~\eqref{eq:s-2-wormholes} that $d_{G_w}(v_1,v_2)=2$.
\end{proof}

\bibliography{paper}
\end{document}